\documentclass[11pt,onecolumn]{article}

\usepackage{a4wide}
\usepackage{amsmath,amsthm,epsfig,amssymb,amsbsy}
\usepackage{enumerate}
\usepackage{comment}
\usepackage{algorithm}
\usepackage{algorithmic}
\usepackage{amsfonts}       
\usepackage{dsfont}
\usepackage{enumitem}
\usepackage{amssymb}
\usepackage{comment}
\usepackage{adjustbox}

\usepackage{pgfplots}
\pgfplotsset{compat=1.18}
\usepackage{subcaption}

\usepackage{pgfplotstable}
\usepackage{multirow}
\usepackage{graphicx}

\usepackage[
giveninits=true,
maxbibnames=9,
maxcitenames=2,
backend=biber,
bibstyle=numeric,
doi=false,isbn=false,url=false,
]{biblatex}
\renewbibmacro{in:}{
    \ifentrytype{article}
    {}
    {\printtext{\bibstring{in}\intitlepunct}}}
\addbibresource{references.bib}

\newenvironment{keywords}{
\begin{paragraph}{Keywords:}
}
{
\end{paragraph}
}
    
\newenvironment{subclass}{
\begin{paragraph}{AMS Subject Classification:}
}
{\end{paragraph}
}

\DeclareMathOperator*{\argmin}{arg\,min}
\DeclareMathOperator*{\argminimax}{arg\,minimax}
\DeclareMathOperator*{\argmax}{arg\,max}

\DeclareMathOperator{\infconv}{\mathbin{\square}}

\DeclareMathOperator{\dom}{dom}

\DeclareMathOperator{\intr}{int}

\DeclareMathOperator{\id}{id}

\newcommand{\bR}{\mathbb{R}}

\newcommand{\bN}{\mathbb{N}}

\newcommand{\exR}{\overline{\mathbb{R}}}

\makeatletter
\newcommand{\aprox}[3][\@nil]{%
  \def\tmp{#1}%
   \ifx\tmp\@nnil
       \operatorname{prox}_{#3}^{#2}
    \else
         \operatorname{prox}_{#3}^{#1 \star #2}
    \fi}

\newcommand{\aenv}[3][\@nil]{%
  \def\tmp{#1}%
   \ifx\tmp\@nnil
       \operatorname{env}_{#3}^{#2}
    \else
         \operatorname{env}_{#3}^{#1 \star #2}
    \fi}

\newcommand{\bprox}[3][\@nil]{%
  \def\tmp{#1}%
   \ifx\tmp\@nnil
       \operatorname{bprox}_{#3}^{#2}
    \else
        \operatorname{bprox}_{#3}^{#1 #2}
    \fi}
\makeatother

\usepackage{hyperref}
\hypersetup{
    colorlinks=true,
    linkcolor=blue,
    filecolor=magenta,
    citecolor =magenta,  
    urlcolor=magenta,
    pdftitle={Power PPA and ALM}
    }
\usepackage[capitalize]{cleveref}[0.19]

\crefname{section}{section}{sections}
\crefname{subsection}{subsection}{subsections}
\Crefname{section}{Section}{Sections}
\Crefname{subsection}{Subsection}{Subsections}

\Crefname{figure}{Figure}{Figures}

\crefformat{equation}{\textup{#2(#1)#3}}
\crefrangeformat{equation}{\textup{#3(#1)#4--#5(#2)#6}}
\crefmultiformat{equation}{\textup{#2(#1)#3}}{ and \textup{#2(#1)#3}}
{, \textup{#2(#1)#3}}{, and \textup{#2(#1)#3}}
\crefrangemultiformat{equation}{\textup{#3(#1)#4--#5(#2)#6}}%
{ and \textup{#3(#1)#4--#5(#2)#6}}{, \textup{#3(#1)#4--#5(#2)#6}}{, and \textup{#3(#1)#4--#5(#2)#6}}

\Crefformat{equation}{#2Equation~\textup{(#1)}#3}
\Crefrangeformat{equation}{Equations~\textup{#3(#1)#4--#5(#2)#6}}
\Crefmultiformat{equation}{Equations~\textup{#2(#1)#3}}{ and \textup{#2(#1)#3}}
{, \textup{#2(#1)#3}}{, and \textup{#2(#1)#3}}
\Crefrangemultiformat{equation}{Equations~\textup{#3(#1)#4--#5(#2)#6}}%
{ and \textup{#3(#1)#4--#5(#2)#6}}{, \textup{#3(#1)#4--#5(#2)#6}}{, and \textup{#3(#1)#4--#5(#2)#6}}

\usepackage{crossreftools}
\pdfstringdefDisableCommands{%
    \let\Cref\crtCref
    \let\cref\crtcref
}

\newtheorem{theorem}{Theorem}[section]
\newlist{thmenum}{enumerate}{1} 
\setlist[thmenum]{label=(\roman*), ref=\theproposition(\roman*), font=\rm}
\crefalias{thmenumi}{theorem}

\newtheorem{lemma}[theorem]{Lemma}
\newlist{lemenum}{enumerate}{1} 
\setlist[lemenum]{label=(\roman*), ref=\theproposition(\roman*), font=\rm}
\crefalias{lemenumi}{lemma} 

\newlist{assumenum}{enumerate}{1} 
\setlist[assumenum]{leftmargin=2.1cm,label=(A\arabic*),font=\bfseries}
\creflabelformat{assumenumi}{#2#1#3}
\crefname{assumenumi}{assumption}{assumptions}
\Crefname{assumenumi}{Assumption}{Assumptions}

\newtheorem{proposition}[theorem]{Proposition}
\newlist{propenum}{enumerate}{1} 
\setlist[propenum]{label=(\roman*), ref=\theproposition(\roman*), font=\rm}
\crefalias{propenumi}{proposition} 

\newtheorem{definition}[theorem]{Definition}
\newlist{defenum}{enumerate}{1} 
\setlist[defenum]{label=(\roman*), ref=\theproposition(\roman*), font=\rm}
\crefalias{defenumi}{definition} 

\theoremstyle{remark}

\newtheorem{example}[theorem]{Example}

\def\ifsvjour{false}

\numberwithin{equation}{section}

\title{Global Convergence Analysis of the Power Proximal Point and Augmented Lagrangian Method}
\author{Konstantinos A. Oikonomidis\thanks{KU Leuven,
		Department of Electrical Engineering (ESAT-STADIUS),
		Kasteelpark Arenberg 10, 3001 Leuven, Belgium~
		{\tt%
			\href{mailto:konstantinos.oikonomidis@kuleuven.be}{\{konstantinos.oikonomidis,}%
            \href{mailto:alexander.bodard@kuleuven.be}{alexander.bodard,}%
			\href{mailto:panos.patrinos@kuleuven.be}{panos.patrinos\}}%
			\href{mailto:konstantinos.oikonomidis@kuleuven.be,alexander.bodard@kuleuven.be,panos.patrinos@kuleuven.be}{@kuleuven.be}%
		}
	} \and 
Alexander Bodard\footnotemark[1] \and Emanuel Laude\thanks{Proxima Fusion GmbH, Fl\"oßergasse 2, 81369 Munich, Germany \texttt{ \href{mailto:elaude@proximafusion.com}{elaude@proximafusion.com}} \\
This work was supported by the Research Foundation Flanders (FWO) research projects G081222N, G033822N, G0A0920N; the Research Council KU Leuven C1 project with ID C14/24/103.} \and Panagiotis Patrinos\footnotemark[1]}

\begin{document}

\maketitle
\begin{abstract}
In this paper we study an unconventional inexact \emph{Augmented Lagrangian Method} (ALM) for convex optimization problems, as first proposed by Bertsekas, wherein the penalty term is a potentially non-Euclidean norm raised to a power between one and two. We analyze the algorithm through the lens of a nonlinear \emph{Proximal Point Method} (PPM), as originally introduced by Luque, applied to the dual problem. While Luque analyzes the order of local convergence of the scheme with Euclidean norms our focus is on the non-Euclidean case which prevents us from using standard tools for the analysis such as the nonexpansiveness of the proximal mapping.
To allow for errors in the primal update, we derive two implementable stopping criteria under which we analyze both the global and the local convergence rates of the algorithm. More specifically, we show that the method enjoys a fast sublinear global rate in general and a local superlinear rate under suitable growth assumptions. We also highlight that the power ALM can be interpreted as classical ALM with an implicitly defined penalty-parameter schedule, reducing its parameter dependence.
Our experiments on a number of relevant problems suggest that for certain powers the method performs similarly to a classical ALM with fine-tuned adaptive penalty rule, despite involving fewer parameters.
\end{abstract}
\begin{keywords}
augmented Lagrangian $\cdot$ duality $\cdot$ proximal point
\end{keywords}
\begin{subclass}
65K05 $\cdot$ 49J52 $\cdot$ 90C30
\end{subclass}

\tableofcontents


\def\true{true}
\def\false{false}

\section{Introduction}
The \emph{Augmented Lagrangian Method} (ALM) introduced in 1969 by Hestenes and Powell \cite{hestenes1969multiplier,powell1969method} is a popular algorithm that allows one to cast a difficult nonsmooth or constrained problem into a sequence of smooth ones. Rockafellar showed in the seventies that, in the convex regime, the augmented Lagrangian method can be interpreted in terms of the \emph{Proximal Point Method} (PPM) applied to the Lagrangian dual problem \cite{rockafellar1976augmented}. However, the classical PPM requires careful selection of the step-sizes to attain fast convergence to a feasible solution \cite{luque1984asymptotic}.
In this work we consider a variant of the augmented Lagrangian method for convex optimization problems that can be interpreted as a classical augmented Lagrangian method with an implicitly defined adaptive penalty scheme.
For that purpose we consider a different proximal point method that is obtained by replacing the squared Euclidean proximal term (prox-function) with a higher-power of a potentially non-Euclidean norm. We refer to this variant of the PPM as the \emph{power} PPM. Specializing to the Euclidean norm raised to some power this is an instance of Luque's nonlinear proximal point method \cite{luque1984nonlinear,luque1986nonlinear,luque1987nonlinear}. Its dual counterpart, the nonlinear augmented Lagrangian method was analyzed by the same author \cite{luque1986nonlinearalm} expanding upon the work of Bertsekas \cite{bertsekas2014constrained}, who studied the algorithm without its connection to the proximal point method.
However, both works are focused on the local convergence of the method only.
More recently, Nesterov rediscovered Luque's nonlinear proximal point method in the context of tensor methods \cite{nesterov2021inexact}, analyzing its fast global sublinear convergence rate, see also \cite{ahookhosh2024high,ahookhosh_high-order_2021}.
However, the results in \cite{nesterov2021inexact} are specialized to Euclidean norms and the error tolerances are incorporated in terms of a tailored relative error bound that is impractical in the context of ALM. 
We note that penalty and augmented Lagrangian methods where the penalty term is raised to some power smaller than $2$, similarly to the proposed ALM, have also been analyzed recently \cite{izmailov2023convergence,bourkhissi2025convergenceanalysislinearizedellq,bodard2024inexactpoweraugmentedlagrangian}.

Complementing the results of \cite{nesterov2021inexact,luque1986nonlinearalm,luque1987nonlinear}, we analyze the global convergence rate and the order of local convergence of the power proximal point method and its dual counterpart, the power augmented Lagrangian method (power ALM), under two implementable stopping criteria. Particular emphasis is put on the usage of non-Euclidean norms which goes beyond the existing setup in \cite{luque1987nonlinear}, and comes with the challenge of a lack of nonexpansiveness \cite{laude2025anisotropic}. As a remedy, we leverage the objective function or its anisotropic Moreau envelope \cite{laude2025anisotropic} as merit functions that replace the more classical distance to the solution.

The favorable global rates and the higher order of local convergence is traded off with an increased difficulty of the subproblems which are H\"older continuous in general. In our simulations we therefore utilize a recent family of solvers \cite{nesterov2015universal, li2024simpleuniformlyoptimalmethod, oikonomidis2024adaptive} tailored for (locally) H\"older continous minimization problems besides classical off-the-shelf implementations of (L-)BFGS. Our numerical evaluation shows that the combined scheme behaves favorably when compared to the classical ALM with a fixed penalty parameter while it performs on par with classical ALM with a fine-tuned penalty parameter schedule. This suggests to interpret power ALM as a penalty-parameter-schedule-free augmented Lagrangian method.

More precisely, our contributions are summarized as follows:

\begin{itemize}
    \item Complementing Nesterov \cite{nesterov2021inexact} we derive fast sublinear rates for the suboptimality gap under more suitable (in the context of the ALM) stopping criteria based on either (1) $\varepsilon$-subgradients or (2) $\varepsilon$-proximal points. A complication arises in the latter case (2) whenever the cost is extended-valued due to infeasibility of iterates and renders the suboptimality gap unsuited for the analysis. This is remedied by interpreting the power PPM as an \emph{anisotropic gradient descent method} \cite{laude2025anisotropic} applied to the anisotropic Moreau envelope which distinguishes our approach from existing ones. In addition, we show that the suboptimality gap converges locally with higher order under a suitable growth condition.
    \item Specializing to the augmented Lagrangian setup we show that our stopping criteria are implementable for the power ALM. In addition we show that the ergodic primal-dual gap converges with the same fast rate as the non-ergodic dual suboptimality. We show that for linear equality constraints this translates to fast global ergodic rates for infeasibility and the primal suboptimality gaps.
    \item We compare the performance of the classical and power ALM on a collection of relevant problems using various powers $p$ and a collection of inner solvers that perform well on H\"older continuous minimization problems. 
    We show that for certain choices of $p$ the power ALM consistently outperforms its classical counterpart with a fixed penalty parameter in terms of the cumulative number of inner iterations. Moreover, it behaves similarly to classical ALM with a fine-tuned adaptive schedule for the penalty parameter despite involving fewer parameters.
\end{itemize}

The remainder of the paper is organized as follows:

In \cref{section:inexact_pppa}, we analyze the global and local convergence of the power proximal point method for nonsmooth convex minimization problems where the prox-function is either the $\ell_2$-norm or the $\ell_{p+1}$-norm raised to the power $p+1$. In \cref{section:ALM}, we specialize our results to the augmented Lagrangian setup and analyze the ergodic convergence of the power ALM. \Cref{section:numerics} presents the numerical evaluation of the method. \Cref{section:conclusion} concludes the paper.
\subsection*{Notation}
We denote by $\langle \cdot, \cdot\rangle$ the standard Euclidean inner product and by $\bR^n \times \bR^m$ the Cartesian product between $\bR^n$ and $\bR^m$, while $\bN$ is the set of the natural numbers and $\bN_0 = \bN \cup \{0\}$. The effective domain of an extended real-valued function $f : \bR^n \to \exR :=\bR \cup \{+\infty\}$ is denoted by $\dom f:=\{x\in\bR^n : f(x)<\infty\}$, and we say that $f$ is proper if $\dom f\neq\emptyset$; lower semicontinuous (lsc) if $f(\bar x)\leq\liminf_{x\to\bar x}f(x)$ for all $\bar x\in\bR^n$. We define by $\Gamma_0(\bR^n)$ the class of all proper, lsc convex functions $f:\bR^n \to \exR$ and with $\mathcal{C}^k(\bR^n)$ the ones which are $k$ times continuously differentiable on $\bR^n$.
For any functions $f :\bR^n \to \exR$ and $g:\bR^n \to \exR$ we define the infimal convolution or epi-addition of $f$ and $g$ as $(f \infconv g)(x) = \inf_{y \in \bR^n} g(x-y) + f(y)$.
We denote by $\bR^n_+=[0, +\infty)^n$ the nonnegative orthant. Otherwise we adopt the notation from \cite{RoWe98}. 

%

\section{Power proximal point and augmented Lagrangian method} \label{sec:power_alm}
This section first introduces the class of problems considered in this work, and formulates both the Langrangian primal and dual problems.
Then, the power augmented Lagrangian method is derived as an instance of the power proximal point method applied to the Lagrangian dual problem, generalizing the Euclidean setup \cite{rockafellar1974augmented}.
In addition, connections to generalized and sharp Lagrangians \cite{RoWe98} are provided. 

\subsection{Problem class and duality}
In this work we consider convex convex-composite problems that take the form
\begin{align}\label{eq:problem}
\inf_{x \in \bR^n}\big\{\varphi(x)\equiv f(x) + g(\mathcal{A}(x)) \big\},
\end{align}
where $f\in \Gamma_0(\bR^n)$ and $g \in \Gamma_0(\bR^m)$ and $\mathcal{A} :\bR^n \to \bR^m$ is a nonlinear mapping.
In the Rockafellian perturbartion framework for convex duality \cite{RoWe98} the primal problem is embedded in a family of parametric minimization problems of the form $\inf_{x \in \bR^n} F(x, u)$ where $F :\bR^n \times \bR^m \to \exR$ is defined by $F(x,u):=f(x) + g(\mathcal{A}(x) +u)$. Then, \cref{eq:problem} is recovered by the choice $u=0$ as $\inf_{x \in \bR^n} F(x, 0)$. By dualizing the linear equality constraint $u=0$ with a Lagrange multiplier $y$ we obtain the convex-concave Lagrangian $L : \bR^n \times \bR^m \to \exR$ of \cref{eq:problem}:
\begin{align} 
L(x,y) 
&=f(x) + \langle A(x), y\rangle - g^*(y).
\end{align}
The Lagrange dual function is given as
\begin{align} \label{eq:dual_function}
\varrho(y):=\inf_{x \in \bR^n}L(x,y),
\end{align}
and the Lagrange dual problem can be written as
\begin{align} \label{eq:dual_problem}
\sup_{y \in \bR^m}~\varrho(y) = \sup_{y \in \bR^m}\inf_{x \in \bR^n}L(x,y).
\end{align}
For the remainder of this paper we assume that strong duality, i.e., the identity $
    \sup_{y \in \bR^m}\varrho(y) = \inf_{x \in\bR^n} \varphi(x)$,
holds and primal and dual solutions exist. Sufficient conditions for convexity of $\varphi$ and strong duality for nonlinear $\mathcal{A}$ in a general conic optimization context are explored in \cite{bonnans2013perturbation,burke2021study}. A classical example is when $g$ is the indicator function of the nonpositive orthant, and the component-functions of $\mathcal{A}$ are convex and smooth, or when $g$ is the indicator of a closed, convex cone and $\mathcal{A}$ is affine. A classical sufficient condition for strong duality in this case is obtained via Slater's condition.

\subsection{Dual power proximal point}
The algorithm that is presented in this paper is obtained by applying a nonlinear extension of the proximal point scheme to the Lagrangian dual problem \cref{eq:dual_problem}.
For notational convenience we introduce
\begin{align}
\phi:=\tfrac{1}{p+1}\|\cdot\|_r^{p+1},
\end{align}
for some $p\geq 1$, where $\|\cdot\|_r$ is either the Euclidean norm, i.e., $r=2$ or the $(p+1)$-norm, i.e., $r={p+1}$. For brevity we omit $r$ and write $\|\cdot\|_s=\|\cdot\|_*$, for $s \in \{2, q+1\}$ with $\frac{1}{p+1} + \frac{1}{q+1}=1$, for the dual norm in the remainder of the manuscript. Note the convex conjugacy relations $\phi^*=\tfrac{1}{q+1}\|\cdot\|_*^{q+1}$ and $\nabla \phi \circ \nabla \phi^* = \id$.
We highlight that in applications the $(p+1)$-norm can have certain advantages over the $2$-norm because it leads to a separable $\phi$ whereas $\tfrac{1}{p+1}\|\cdot\|_2^{p+1}$ is nonseparable unless $p=1$.
In addition, we introduce the epi-scaling $\lambda \star \phi$ of $\phi$, defined as $(\lambda \star \phi)(x) = 
        \lambda \phi(\lambda^{-1} x)$ if $\lambda > 0$ and $(\lambda \star \phi)(x)=\delta_{\{0\}}(x)$ otherwise,
whose convex conjugate amounts to $(\lambda \star \phi)^* = \lambda \phi^*.$
For our specific choice of $\phi$ we have $(\lambda \star \phi)(x) = \lambda^{-p}\tfrac{1}{p+1}\|x\|^{p+1}$ and $(\lambda \star \phi)^* = \lambda \tfrac{1}{q+1}\|\cdot\|_*^{q+1}$ with $1=\tfrac{1}{q+1}+\tfrac{1}{p+1}$.

The algorithm that is studied in this paper amounts to the proximal point method with prox-term $\phi$ and step-size $\lambda$ applied to the negative dual function $\psi := -\varrho$:
\begin{align} \label{eq:ppa}
y^{k+1} &= \aprox[\lambda]{\phi}{\psi}(y^k) = \argmax_{y \in \bR^m}~\varrho(y) - \lambda \star \phi(y^k - y).
\end{align}
Expanding the definition of the Lagrange dual function $\varrho$, the exact version of the algorithm can be rewritten:
\begin{align} \label{eq:update_minimax}
(x^{k+1},y^{k+1}) &= \argminimax_{x \in \bR^n, y \in \bR^m}~L(x,y) - \lambda \star \phi(y^k - y),
\end{align}
where $x^{k}$ is the primal variable.
We introduce the corresponding augmented Lagrangian with parameter $\lambda$
\begin{align} \label{eq:augmented_lagrangian}
L_\lambda (x,y) &:= \sup_{\eta \in \bR^m} L(x,\eta) - \lambda \star \phi(y-\eta).
\end{align}
Note that $-L_\lambda (x^{k+1}, \cdot)$ is the infimal convolution of $-L(x^{k+1},\cdot)$ and $\lambda \star \phi$ and thus, by \cite[Lemma 2.7(ii)]{laude2025anisotropic}, its gradient w.r.t. $y$ amounts to $-\nabla_y L_\lambda (x^{k+1},y^k) = \nabla \phi(\lambda^{-1}(y^k - y^{k+1}))$ for $y^{k+1} := \argmax_{y \in \bR^m} \; L(x^{k+1},y) - \lambda \star \phi(y^k - y)$.
Then \cref{eq:update_minimax} separates into a primal and a consecutive dual update:
\begin{align} 
x^{k+1} &= \argmin_{x\in \bR^n} \;L_\lambda (x,y^k) \label{eq:exact-power-alm-primal} \\ 
y^{k+1} &= y^k + \lambda \nabla \phi^*(\nabla_y L_\lambda (x^{k+1}, y^k)). \label{eq:exact-power-alm-dual}
\end{align}
While the dual update \cref{eq:exact-power-alm-dual} can typically be solved in closed form given $x^{k+1}$, the primal update \cref{eq:exact-power-alm-primal} is solved inexactly using an iterative algorithm, e.g., a Quasi-Newton method. Since the dual update depends on $x^{k+1}$ the error in the primal update propagates to the dual update. This requires us to analyze the algorithm under a certain error tolerance.
We conclude this section with a simple example for the augmented Lagrangian function.
\begin{example}[linear equality constraints] \label{example:linear_eq_constraints}
    Let $g=\delta_{\{0\}}$ and $\mathcal{A}$ be an affine linear mapping $\mathcal{A}(x)=Ax-b$ for matrix $A \in \bR^{m \times n}$ and $b \in \bR^m$. Then $g(\mathcal{A}(x))$ encodes the equality constraint $\mathcal{A}(x)=b$ and the augmented Lagrangian reads, since $(\lambda \star \phi)^* = \lambda \phi^*$ and $\phi(-x)=\phi(x)$:
    \begin{align} \label{eq:power_alm_lin_eq}
    L_\lambda (x,y) &=f(x) + \sup_{\eta \in \bR^m} \langle Ax-b,\eta \rangle  - \lambda \star \phi(y - \eta) \notag \\
    &=f(x)+\langle y, Ax-b\rangle + \lambda \phi^*(Ax-b),
    \end{align}
    where the penalty $\phi^*=\tfrac{1}{q+1}\|\cdot\|_*^{q+1}$ with $1=\tfrac{1}{q+1}+\tfrac{1}{p+1}$.
    The gradient of the augmented Lagrangian w.r.t. $y$ amounts to $\nabla_y L_\lambda(x,y)= Ax-b$ and hence the dual update comes out as
    \begin{align} \label{eq:dual_update_linear_equality}
        y^{k+1} = y^k + \lambda \nabla \phi^*(A x^{k+1}-b).
    \end{align}
\end{example}

Assume that $\lambda \star \phi=\tfrac{\lambda}{p+1}\|\lambda^{-1}(\cdot)\|^{p+1}$. Then for $p \to \infty$, $\lambda \star \phi$ converges (in the pointwise sense) to the indicator function $\delta_{\lambda B_{\|\cdot\|}}$ of the unit ball corresponding to the norm $\|\cdot\|$ scaled with radius $\lambda$. This limit case can be interpreted in terms of a trust-region constraint with radius $\lambda$ in the proximal point scheme \cref{eq:ppa}.
Dually, since $\phi^*=\tfrac{1}{q+1}\|\cdot\|_*^{q+1}$ with $1=\tfrac{1}{q+1}+\tfrac{1}{p+1}$, $q+1 \to 1$ and hence $\lambda \phi^*$ converges pointwisely to the dual norm $\lambda\|\cdot\|_*$ scaled with $\lambda$.
In this limit case, the function $\phi^*$ is a so-called exact penalty \cite{bertsekas1975necessary}, which guarantees that for $\lambda$ sufficiently large the constraints are feasible at the minimizer of the augmented Lagrangian for a certain multiplier $y^0$. Furthermore, in this case the minimizer of $L_\lambda(\cdot, y^0)$ coincides with the minimizer of the original constrained problem \cref{eq:problem}; see \cite[Definition 11.60]{RoWe98}. This reflects the dual perspective in which proximal point with a trust-region constraint converges in a single step to the solution if $y^0$ is close to $y^* \in \argmax \varrho$ and $\lambda$ is taken sufficiently large. 

\section{Convergence analysis of the power proximal point method under inexactness}
\label{section:inexact_pppa}
In this section we analyze the global and local convergence rates of the power proximal point method under certain error tolerances when applied to the problem of minimizing a convex, proper lsc function $\psi \in \Gamma_0(\bR^n)$,  i.e.,
\begin{align} \label{eq:pppa_problem}
    \inf_{y \in \bR^m} \psi(y).
\end{align}
The results provided in this section are independent of the usecase of the augmented Lagrangian method.
In particular we show that the algorithm converges globally with a fast sublinear rate and transitions locally to fast superlinear convergence if $\psi$ satisfies a mild growth condition.

Classically, the convergence of the proximal point method is derived using the nonexpansiveness of the proximal mapping which guarantees that the distance to a solution is nonincreasing. This is still valid for the power proximal point method if the prox-function is a higher power of the Euclidean norm \cite{luque1984nonlinear}. However, in this work our focus is on non-Euclidean norms which lack any form of nonexpansiveness. Hence, the distance to the solution cannot be used to show convergence. Instead we rely on the cost $\psi$ or its anisotropic Moreau envelope \cite{laude2025anisotropic} as merit functions which are in fact the quantities of interest in minimization problems. Specifically, the fast rates are derived relative to the suboptimality gaps.

\subsection{Fast global convergence under \texorpdfstring{$\varepsilon$}{ε}-subgradients}
In this subsection we analyze the global complexity of the power proximal point method under a certain error tolerance.
Note that by first-order optimality the update $y^{k+1} = \aprox[\lambda]{\phi}{\psi}(y^k)$ can be expressed in terms of finding a pair $(v^k, y^{k+1})$ that satisfies the following nonlinear system of inclusions,
\begin{subequations}
\label{eq:exact_alg_update_y_full}
\begin{align} 
v^k &\in \partial \psi(y^{k+1}) \\
y^{k+1} &= y^k - \lambda \nabla \phi^*(v^k). \label{eq:exact_alg_update_y}
\end{align}
\end{subequations}
To incorporate error tolerances we replace $\partial \psi(y^{k+1})$ with the $\varepsilon_k$-subdifferential of $\psi$ at $y^{k+1}$, which is defined as follows.
For $\varepsilon > 0$ the vector $v \in \bR^n$ is an $\varepsilon$-subgradient of $\psi$ at $\bar y \in \dom \psi$ if the following inequality holds true:
\begin{align} \label{eq:eps_subgradient}
    \psi(y) \geq \psi(\bar y) + \langle v, y- \bar y\rangle - \varepsilon,
\end{align}
for all $x \in \dom \psi$. The set of all $\varepsilon$-subgradients at $\bar y$ is called the $\varepsilon$-subdifferential of $\psi$ at $\bar y$ and is denoted by $\partial_\varepsilon \psi(\bar y)$. The algorithm that is analyzed in this section is listed in \cref{alg:inexact_pppa_averaging}, which possibly incorporates an average with an anchor point $y^0$.
\begin{algorithm}
\caption{Inexact power proximal point method with averaging}

\label{alg:inexact_pppa_averaging}
\begin{algorithmic}
\REQUIRE Choose $y^0 \in \bR^m$ and a sequence $\{\theta_k\}_{k \in \bN_0}$ with $\theta_k \in (0, 1]$.
\FORALL{$k=0, 1, \dots$}
   \STATE Set $w^k = \theta_k y^k + (1 - \theta_k)y^0$.
   \STATE Choose $\varepsilon_{k} \geq 0$ and solve
   \begin{subequations}
   \begin{align}
    \label{eq:inexact_ppa_v_anchor}v^k &\in \partial_{\varepsilon_k} \psi(y^{k+1}) \\
    \label{eq:inexact_ppa_y_anchor} y^{k+1} &= w^k - \lambda \nabla \phi^*(v^k).
    \end{align}
    \end{subequations}
\ENDFOR
\end{algorithmic}
\end{algorithm}
Since $\partial \psi(\bar y) \subseteq \partial_\varepsilon \psi(\bar y)$, \cref{alg:inexact_pppa_averaging} is a strict generalization of the exact version.
To facilitate our analysis we need the following lemma:
\begin{lemma}\label{thm:uniform_convexity}
The following result holds for $\phi=\tfrac{1}{p+1}\|\cdot\|^{p+1}$ with $p \geq 1$. For any $x,y \in \bR^m$ the following inequality holds:
\begin{align} \label{eq:uniform_convexity}
    \phi(x) \geq \phi(y) + \langle \nabla \phi(y), x-y \rangle + \tfrac{1}{2^{p-1}}\phi(x-y).
\end{align}
\end{lemma}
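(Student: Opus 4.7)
The inequality asserts that $\phi = \tfrac{1}{p+1}\|\cdot\|^{p+1}$ is $(p+1)$-uniformly convex with modulus $\tfrac{1}{2^{p-1}}$, and the plan is to reduce it, in both admissible choices of $\|\cdot\|$, to a single one-dimensional inequality and then verify the latter by elementary calculus.

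For $\|\cdot\|=\|\cdot\|_{p+1}$ the prox-function separates as $\phi(x)=\sum_i\tfrac{1}{p+1}|x_i|^{p+1}$ with $\nabla\phi(y)_i=|y_i|^{p-1}y_i$, so the Bregman estimate decouples coordinate-wise and it suffices to establish the scalar claim
\begin{equation*}
\tfrac{1}{p+1}|a|^{p+1}-\tfrac{1}{p+1}|b|^{p+1}-|b|^{p-1}b(a-b)\;\geq\;\tfrac{1}{2^{p-1}(p+1)}|a-b|^{p+1}\quad\text{for all }a,b\in\bR.
\end{equation*}
For $\|\cdot\|=\|\cdot\|_2$ the inequality is rotationally invariant and positively homogeneous of degree $p+1$, so I may assume $x-y=e_1$ and $y=\alpha e_1+y_\perp$ with $y_\perp\perp e_1$; writing $\beta=\|y_\perp\|_2$, the claim reduces to $F(\alpha,\beta)\geq\tfrac{1}{2^{p-1}}$ for $(\alpha,\beta)\in\bR\times[0,\infty)$, where
\begin{equation*}
F(\alpha,\beta):=\bigl((\alpha+1)^2+\beta^2\bigr)^{(p+1)/2}-(\alpha^2+\beta^2)^{(p+1)/2}-(p+1)\alpha\,(\alpha^2+\beta^2)^{(p-1)/2}.
\end{equation*}
The slice $\beta=0$ is precisely the scalar claim displayed above.

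To establish the scalar claim, by positive homogeneity I normalize $|a-b|=1$ and, using the symmetry $(a,b)\mapsto(-a,-b)$, take $a=b+1$; define $g(b):=|b+1|^{p+1}-|b|^{p+1}-(p+1)|b|^{p-1}b$. Analyzing $g$ on $b\geq 0$, $b\leq -1$, and $b\in(-1,0)$, convexity of $t\mapsto t^p$ via the mean-value bound $t^p-(t-1)^p\leq pt^{p-1}$ yields $g'(b)\geq 0$ on the first two regimes, so $g(b)\geq\min\{g(0),g(-1)\}=\min\{1,p\}\geq 2^{1-p}$. The crucial regime $b\in(-1,0)$ becomes, under $s=-b$, the function $\tilde h(s)=(1-s)^{p+1}-s^{p+1}+(p+1)s^p$, whose critical points satisfy $(1-s)^p+s^p=ps^{p-1}$; a short algebraic simplification then gives $\tilde h(s^*)=p(s^*)^{p-1}=(1-s^*)^p+(s^*)^p$, and since the symmetric convex map $s\mapsto(1-s)^p+s^p$ on $[0,1]$ is minimized at $s=1/2$ with value $2(1/2)^p=2^{1-p}$, this establishes the scalar inequality, with equality iff $p=1$.

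The principal obstacle is the extension from $\beta=0$ to general $\beta\geq 0$ in the Euclidean case. My approach would be to show $\partial_\beta F\geq 0$, which after factoring $(p+1)\beta$ reduces to the auxiliary bound $A^{(p-1)/2}-B^{(p-1)/2}\geq(p-1)\alpha B^{(p-3)/2}$ with $A=(\alpha+1)^2+\beta^2$, $B=\alpha^2+\beta^2$ and $A-B=2\alpha+1$. Convexity of $\rho\mapsto\rho^{(p-1)/2}$ settles this cleanly for $p\geq 3$; for $p\in[1,3)$ that map is concave and the monotonicity argument must be supplemented by a direct stationary-point analysis on the two-dimensional domain, or alternatively by appealing to the Euclidean version of the inequality in \cite{nesterov2021inexact}, where it appears with the same sharp constant.
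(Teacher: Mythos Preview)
Your treatment of the separable $(p{+}1)$-norm case and the underlying scalar inequality is complete and self-contained, which goes well beyond the paper: the paper simply observes separability and invokes \cite[Lemma~4.2.3]{nesterov2003introductory} for the one-dimensional (equivalently Euclidean) statement. One small slip: on the regime $b\leq -1$ the mean-value bound $t^p-(t-1)^p\leq pt^{p-1}$ actually gives $g'(b)\leq 0$, not $g'(b)\geq 0$; your conclusion $g(b)\geq g(-1)=p$ is still correct since $g$ is then decreasing on $(-\infty,-1]$. The critical-point identity $\tilde h(s^*)=p(s^*)^{p-1}=(1-s^*)^p+(s^*)^p\geq 2^{1-p}$ is a clean way to pin down the sharp constant.

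The Euclidean case, however, remains incomplete precisely where it matters. Your monotonicity argument $\partial_\beta F\geq 0$ rests on convexity of $\rho\mapsto\rho^{(p-1)/2}$ and therefore only covers $p\geq 3$; for $p\in(1,3)$, which is the entire range exercised in the paper ($q\in\{0.7,0.8,0.9\}$ corresponds to $p\in(1,1.5)$), you offer either an unspecified ``direct stationary-point analysis on the two-dimensional domain'' or a citation of Nesterov. The latter is exactly how the paper handles the Euclidean case (it cites \cite[Lemma~4.2.3]{nesterov2003introductory} rather than \cite{nesterov2021inexact}). So as it stands you face a dilemma: if you take the citation route your self-contained scalar work becomes redundant (the one-dimensional case \emph{is} the Euclidean case in $\bR^1$), and if you do not take it the proof has a genuine gap in the parameter range that the paper actually uses.
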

\begin{proof}
    In the case $\phi = \tfrac{1}{p+1}\|\cdot\|_2^{p+1}$ the result follows from \cite[Lemma 4.2.3]{nesterov2003introductory}. If $\phi = \tfrac{1}{p+1}\|\cdot\|_{p+1}^{p+1}$ is chosen, then the result easily follows by considering $\phi$ as a separable sum of $\tfrac{1}{p+1}|\cdot|^{p+1}$ and then applying \cref{eq:uniform_convexity} to each summand.
\ifx\ifsvjour\true
\qed
\fi
\end{proof}


We first prove the following lemma:
\begin{lemma} \label{thm:decrease_inexact_ppa}
Let $w \in \bR^m$. Let $(y^+, v)$ such that $v \in \partial_{\varepsilon} \psi(y^+)$ and $y^+ = w - \lambda \nabla \phi^*(v)$. Then we have for all $y \in \bR^m$:
\begin{align} \label{eq:uniform_convexity_upd}
\psi(y^+) \leq \psi(y) + \lambda \star \phi(w-y) - \tfrac{1}{2^{p-1}} \lambda \star \phi(y^+ -y) + \varepsilon
\end{align}
\end{lemma}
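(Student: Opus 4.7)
The plan is to combine the $\varepsilon$-subgradient inequality at $y^+$ with the uniform convexity bound from \cref{thm:uniform_convexity}, after recognizing that the update rule identifies $v$ as a gradient of the prox-term.

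First, from $v\in\partial_\varepsilon\psi(y^+)$ and the definition \cref{eq:eps_subgradient} of $\varepsilon$-subgradients, evaluating at an arbitrary $y$ and rearranging, I would immediately obtain
\begin{align*}
\psi(y^+) \leq \psi(y) + \langle v, y^+ - y\rangle + \varepsilon.
\end{align*}
The task then reduces to controlling the linear term $\langle v, y^+-y\rangle$ by the difference of two prox-values.

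Second, I would rewrite the dual update $y^+ = w - \lambda \nabla \phi^*(v)$ as $\nabla \phi^*(v) = \lambda^{-1}(w-y^+)$, and apply $\nabla\phi$ using $\nabla\phi\circ\nabla\phi^* = \id$ to obtain $v = \nabla\phi(\lambda^{-1}(w-y^+))$. Using the chain rule for the epi-scaling, this is exactly $v = \nabla(\lambda\star\phi)(w-y^+)$.

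Third, I would transfer the uniform convexity \cref{eq:uniform_convexity} from $\phi$ to $\lambda\star\phi$: since $(\lambda\star\phi) = \lambda^{-p}\cdot\tfrac{1}{p+1}\|\cdot\|^{p+1}$ is just a positive rescaling of $\phi$, multiplying \cref{eq:uniform_convexity} by $\lambda^{-p}$ gives the analogous inequality
\begin{align*}
(\lambda\star\phi)(a) \geq (\lambda\star\phi)(b) + \langle \nabla(\lambda\star\phi)(b),a-b\rangle + \tfrac{1}{2^{p-1}}(\lambda\star\phi)(a-b)
\end{align*}
for all $a,b\in\bR^m$. Specializing to $a=w-y$ and $b=w-y^+$, so that $a-b = y^+-y$, and using the identification from the previous step, this becomes
\begin{align*}
(\lambda\star\phi)(w-y) \geq (\lambda\star\phi)(w-y^+) + \langle v, y^+-y\rangle + \tfrac{1}{2^{p-1}}(\lambda\star\phi)(y^+-y).
\end{align*}

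Finally, I would rearrange to upper-bound $\langle v,y^+-y\rangle$, drop the nonnegative term $(\lambda\star\phi)(w-y^+)\geq 0$, and substitute into the $\varepsilon$-subgradient bound from the first step to obtain \cref{eq:uniform_convexity_upd}. There is no substantial obstacle here; the only delicate point is the bookkeeping around the epi-scaling and the transfer of uniform convexity from $\phi$ to $\lambda\star\phi$, which is clean because $\lambda\star\phi$ is a positive multiple of $\phi$ for our specific choice of prox-function.
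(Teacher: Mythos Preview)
Your proposal is correct and follows essentially the same approach as the paper: both combine the $\varepsilon$-subgradient inequality with the uniform convexity bound from \cref{thm:uniform_convexity} after identifying $v=\nabla\phi(\lambda^{-1}(w-y^+))$, and then drop the nonnegative term $(\lambda\star\phi)(w-y^+)$. The only cosmetic difference is that you first transfer the uniform convexity inequality to $\lambda\star\phi$ and then apply it, whereas the paper applies \cref{thm:uniform_convexity} to $\phi$ directly and carries the factor $\lambda^{-p}$ through the computation.
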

\begin{proof}
Let $y \in \bR^m$. Since $v \in \partial_{\varepsilon}\psi(y^+)$ we have by definition of the $\varepsilon$-subdifferential:
\begin{align*}
    \psi(y) \geq \psi(y^+) + \langle v, y - y^+\rangle - \varepsilon.
\end{align*}
Rearranging yields $\psi(y^+) \leq \psi(y) - \langle v, y - y^+\rangle + \varepsilon.$
Since $v = \nabla \phi(\lambda^{-1}(w - y^+)) = \lambda^{-p} \nabla \phi(w-y^+)$ we obtain
\begin{align*}
\psi(y^+) \leq \psi(y) + \lambda^{-p}\langle \nabla \phi(w-y^+), (w - y) -( w- y^+)\rangle + \varepsilon.
\end{align*}
In light of \cref{thm:uniform_convexity} we can further bound
\begin{align} \label{eq:uniform_convexity_ineq_full}
\psi(y^+) \leq \psi(y) + \lambda^{-p}\phi(w-y) - \lambda^{-p}\phi(w-y^+) - \tfrac{\lambda^{-p}}{2^{p-1}} \phi(y^+ -y) + \varepsilon.
\end{align}
Neglecting $- \lambda^{-p}\phi(w-y^+)$ we obtain the claimed result.
\ifx\ifsvjour\true
\qed
\fi
\end{proof}

We define the sequences of coefficients $\{A_k\}_{k=0}^\infty$ and $\{a_{k}\}_{k=1}^\infty$ with $A_0:=0$, $A_k := k^{p+1}$ and $a_{k+1} := A_{k+1} - A_{k}$ for $k\geq 1$ and for the remainder of this subsection we assume that there exists a $y^\star \in \argmin \psi$. Using \cref{thm:uniform_convexity}, we can describe the global convergence of \cref{alg:inexact_pppa_averaging} for different choices of the sequence $\{\theta_k\}_{k \in \bN_0}$. On the one hand, by choosing $\theta_k := \tfrac{A_k}{A_{k+1}}$, we obtain a scheme in which the update is given in terms of a convex combination of the proximal point and an anchor point. In this case the convergence proof closely follows \cite[Theorem 5]{doikov2020inexact} for the inexact tensor method with averaging. 

On the other hand, when using $\theta_k := 1$ the update depends only on the previous variable. In this setup our analysis is similar to that of \cite[Theorem 1]{doikov2020inexact} for the Monotone Inexact Tensor Method I, \cite[Algorithm 1]{doikov2020inexact}. However, a complication arises due to the fact that the evaluation of $\psi$ is not necessarily cheap. For example, in the context of constrained optimization the Lagrangian dual function might not have a simple closed form and as such evaluating it at each iteration might become very expensive.
Consequently, a monotonic decrease can no longer be guaranteed and thus the analysis has to be adapted. For this reason we introduce an enlargement of the diameter of the initial level-set \cite[Equation 8]{doikov2020inexact}. 
Assuming that the sequence of errors $\varepsilon_k$ is summable we define the constant $\beta:=\sum_{k=0}^\infty \varepsilon_k < \infty$ and the diameter of the level-set
\begin{align}
D_\beta:= \sup \{ \|y^\star - y\| : \psi(y) \leq \psi(y^0) + \beta\}.
\end{align}
We assume that $D_\beta < \infty$ is finite, which happens to be the case when $\psi$ is level-bounded, which is equivalent to $0 \in \intr \dom \psi^*$ \cite[Theorem 11.8(c)]{RoWe98}.
The following theorem describes the global convergence of \cref{alg:inexact_pppa_averaging} for both choices of $\theta_k$. Note that the derived convergence rate for $\theta_k = 1$ is worse than the one for $\theta_k = \tfrac{A_k}{A_{k+1}}$ since it depends on the diameter of the initial level-set, which can be large. In practice, however, we observed the opposite behavior: The variant with $\theta_k=1$ performs better throughout our experiments even for $p=1$ which corresponds to the classical PPM.

\begin{theorem} \label{thm:inexact_ppa_rate}
    Choose the sequence of errors $\{\varepsilon_k\}_{k \in \bN_0}$ such that $\varepsilon_k = \tfrac{c}{(k+1)^{p+1}},$
    for some $c\geq 0$. Let $y^\star \in \argmin \psi$. Then, for the sequence $\{y^k\}_{k \in \bN}$ generated by \cref{alg:inexact_pppa_averaging} the following statements hold true:
    \begin{thmenum}
        \item \label{thm:inexact_ppa_rate:averaging} If $\theta_k := \tfrac{A_k}{A_{k+1}}$, then
        $
            \psi(y^k) - \psi(y^\star) \leq \frac{\lambda^{-p}(p+1)^p \|y^0 - y^\star\|^{p+1} + c}{k^p}.
        $
        \item \label{thm:inexact_ppa_rate:simple} If $\theta_k := 1$, then
        $
            \psi(y^k) - \psi(y^\star) \leq \frac{\lambda^{-p}(p+1)^{p}D_\beta^{p+1} + c}{k^p}.
        $
    \end{thmenum} 
\end{theorem}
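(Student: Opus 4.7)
The plan is to build a Nesterov-style estimating-sequence argument on top of \cref{thm:decrease_inexact_ppa}, weighting iterations by the coefficients $A_k = k^{p+1}$ so that telescoping yields the $\mathcal{O}(k^{-p})$ rate. In both parts I will drop the nonpositive term $-\tfrac{1}{2^{p-1}}\lambda\star\phi(y^{k+1}-y)$ in \cref{eq:uniform_convexity_upd}, since it is not needed for the global rate, and exploit the identity $(\lambda\star\phi)(\cdot) = \tfrac{\lambda^{-p}}{p+1}\|\cdot\|^{p+1}$.

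For part (i) with $\theta_k = A_k/A_{k+1}$, I apply \cref{thm:decrease_inexact_ppa} at $w = w^k$ with the test point $y = \tfrac{A_k}{A_{k+1}}y^k + \tfrac{a_{k+1}}{A_{k+1}}y^\star$. Since $w^k = \tfrac{A_k}{A_{k+1}}y^k + \tfrac{a_{k+1}}{A_{k+1}}y^0$, this gives $w^k - y = \tfrac{a_{k+1}}{A_{k+1}}(y^0-y^\star)$ and hence $\lambda\star\phi(w^k-y) = \tfrac{a_{k+1}^{p+1}}{A_{k+1}^{p+1}}\cdot\tfrac{\lambda^{-p}}{p+1}\|y^0-y^\star\|^{p+1}$. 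Bounding $\psi(y)$ by convexity as $\tfrac{A_k}{A_{k+1}}\psi(y^k) + \tfrac{a_{k+1}}{A_{k+1}}\psi(y^\star)$ and multiplying through by $A_{k+1}$, I obtain a telescopable inequality of the form
\begin{equation*}
A_{k+1}\Delta_{k+1} - A_k\Delta_k \;\leq\; \tfrac{a_{k+1}^{p+1}}{A_{k+1}^{p}}\cdot\tfrac{\lambda^{-p}}{p+1}\|y^0-y^\star\|^{p+1} + A_{k+1}\varepsilon_k,
\end{equation*}
where $\Delta_k := \psi(y^k)-\psi(y^\star)$. Summing from $0$ to $K-1$ and dividing by $A_K = K^{p+1}$ then concludes, provided two elementary estimates are available: the mean value theorem gives $a_{k+1} = (k+1)^{p+1}-k^{p+1} \leq (p+1)(k+1)^p$, hence the uniform bound $\tfrac{a_{k+1}^{p+1}}{A_{k+1}^p} \leq (p+1)^{p+1}$, and the error schedule $\varepsilon_k = c/(k+1)^{p+1}$ produces the exact cancellation $A_{k+1}\varepsilon_k = c$, so that the two telescoped sums collapse to $K(p+1)^p\lambda^{-p}\|y^0-y^\star\|^{p+1}$ and $Kc$, respectively.

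For part (ii) with $\theta_k = 1$, the reference point degenerates to $w^k = y^k$, and a preliminary boundedness step is needed. First, I apply \cref{thm:decrease_inexact_ppa} with $y = y^k$ to derive the quasi-descent estimate $\psi(y^{k+1}) \leq \psi(y^k) + \varepsilon_k$. Telescoping and using the summability bound $\sum_k c/(k+1)^{p+1} \leq \beta$ yields $\psi(y^k) \leq \psi(y^0) + \beta$ for every $k$, whence by definition of $D_\beta$ all iterates satisfy $\|y^k-y^\star\| \leq D_\beta$. Second, I rerun the argument of part (i), now using the test point $y = \alpha_{k+1}y^\star + (1-\alpha_{k+1})y^k$ with $\alpha_{k+1} = a_{k+1}/A_{k+1}$, so that $w^k - y = \alpha_{k+1}(y^k-y^\star)$ and the level-set diameter $D_\beta$ takes the place of $\|y^0-y^\star\|$ in the telescopable inequality. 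The same summation and elementary bounds on $a_{k+1}^{p+1}/A_{k+1}^p$ and $A_{k+1}\varepsilon_k$ then conclude.

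The principal obstacle is the preliminary step of part (ii): securing an a priori diameter bound on the iterates of the non-averaged variant, which is what forces the summability hypothesis on $\{\varepsilon_k\}$ and causes $D_\beta$ rather than $\|y^0-y^\star\|$ to appear in the rate. Everything else, including the uniform bound $a_{k+1}^{p+1}/A_{k+1}^p \leq (p+1)^{p+1}$ and the choice of test points $y$ that couple the reference point $w^k$ with the estimating-sequence weights $A_k$, is routine bookkeeping once the estimating sequence is in place.
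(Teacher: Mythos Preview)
Your proposal is correct and follows essentially the same route as the paper's proof: the same test point $y=\tfrac{A_k}{A_{k+1}}y^k+\tfrac{a_{k+1}}{A_{k+1}}y^\star$ in \cref{thm:decrease_inexact_ppa}, the same convexity bound, the same multiplication by $A_{k+1}$ and telescoping, and for part~(ii) the same preliminary quasi-descent $\psi(y^{k+1})\leq\psi(y^k)+\varepsilon_k$ to secure $\|y^k-y^\star\|\leq D_\beta$. The only cosmetic difference is that you derive the uniform bound $a_{k+1}^{p+1}/A_{k+1}^p\leq(p+1)^{p+1}$ directly via the mean value theorem, whereas the paper cites the equivalent summed estimate $\sum_{k=1}^K a_k^{p+1}/A_k^p\leq(p+1)^{p+1}K$ from \cite{doikov2020inexact}.
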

\ifx\ifsvjour\true
A proof is provided in \cref{sec-appd:prf-inexact_ppa_rate_averaging}.
\fi
\ifx\ifsvjour\false
\begin{proof}
Regardless of the choice of $\theta_k$, by definition of \cref{alg:inexact_pppa_averaging} we have that $v^k \in \partial_{\varepsilon_k}\psi(y^{k+1})$ and $y^{k+1} = w^k - \lambda \nabla \phi^*(v^k)$.
Define $y := \tfrac{a_{k+1}}{A_{k+1}}y^\star +  \tfrac{A_{k}}{A_{k+1}}y^k$. Invoking \cref{thm:decrease_inexact_ppa} and using the convexity of $\psi$ we get:
\begin{align} \label{eq:main_inequality_pppa_averaging}
   \psi(y^{k+1}) &\leq \psi(\tfrac{a_{k+1}}{A_{k+1}}y^\star + \tfrac{A_{k}}{A_{k+1}} y^k) + \lambda \star \phi(w^k-y) + \varepsilon_{k} \notag \\
   &\leq \tfrac{a_{k+1}}{A_{k+1}}\psi(y^\star) + \tfrac{A_{k}}{A_{k+1}} \psi(y^k) + \lambda \star \phi(w^k-y) + \varepsilon_{k}.
\end{align}
``\labelcref{thm:inexact_ppa_rate:averaging}'':
For $\theta_k := \tfrac{A_k}{A_{k+1}}$, we have by definition of $y$ and $w^k$:
\begin{align*}
w^k-y &=  \theta_k y^k + (1 - \theta_k)y^0 -(1 - \theta_k)y^\star - \theta_k y^k =(1 - \theta_k)(y^0-y^\star).
\end{align*}
Substituting this identity in \cref{eq:main_inequality_pppa_averaging} we obtain, since $\lambda \star \phi$ is positively homogeneous with order $p+1$, i.e., $\lambda \star\phi(\alpha x) = |\alpha|^{p+1}(\lambda \star\phi)(x)$ and $1 -\theta_k=\tfrac{a_{k+1}}{A_{k+1}}$:
\begin{align*}
   \psi(y^{k+1}) &\leq \tfrac{a_{k+1}}{A_{k+1}}\psi(y^\star) + \tfrac{A_{k}}{A_{k+1}} \psi(y^k) + (\tfrac{a_{k+1}}{A_{k+1}})^{p+1}(\lambda \star \phi)(y^0-y^\star) + \varepsilon_{k}.
\end{align*}
Rearranging and multiplying both sides with $A_{k+1} > 0$ yields
\begin{align*}
   A_{k+1}\big(\psi(y^{k+1})-\psi(y^\star)\big) &\leq A_{k} \big(\psi(y^k)-\psi(y^\star)\big)
   \\ &\qquad+ A_{k+1} (\tfrac{a_{k+1}}{A_{k+1}})^{p+1}(\lambda \star \phi)(y^0 - y^\star) + A_{k+1}\varepsilon_{k}.
\end{align*}
Summing the inequality from $k=0$ to $k=K-1$ we obtain by telescoping, since $A_0=0$, $A_k = k^{p+1}$ and $\varepsilon_{k} = \tfrac{c}{A_{k+1}}$:
$
   A_{K}\big(\psi(y^{K})-\psi(y^\star)\big) \leq \lambda \star \phi(y^0 - y^\star)\sum_{k=1}^{K} \tfrac{a_{k}^{p+1}}{A_{k}^p}+ Kc.
$
In light of \cite[Equation 35]{doikov2020inexact} we have $\sum_{k=1}^{K} \tfrac{a_{k}^{p+1}}{A_{k}^p} \leq (p+1)^{p+1}K$
and hence
\begin{align*}
   A_{K}\big(\psi(y^{K})-\psi(y^\star)\big) &\leq (p+1)^{p+1}K (\lambda \star \phi)(y^0 - y^\star) + Kc.
\end{align*}
Dividing the inequality by $A_{K}$ yields:
\[
   \psi(y^{K})-\psi(y^\star) \leq \tfrac{(p+1)^{p+1}(\lambda \star \phi)(y^0 - y^\star)}{K^{p}} + \tfrac{c}{K^{p}} =\tfrac{(p+1)^{p+1} (\lambda \star\phi)(y^0 - y^\star) + c}{K^{p}}.      
\]

``\labelcref{thm:inexact_ppa_rate:simple}'': 
For $\theta_k := 1$, we have that $w^k = y^k$ and since $y^k-y=\tfrac{a_{k+1}}{A_{k+1}}(y^k-y^\star)$ using the positive homogeneity of $\phi$ with order $p+1$, we obtain that
\begin{align*}
    \psi(y^{k+1}) \leq \tfrac{a_{k+1}}{A_{k+1}}\psi(y^\star) + \tfrac{A_{k}}{A_{k+1}}\psi(y^k) + (\tfrac{a_{k+1}}{A_{k+1}})^{p+1}(\lambda \star \phi)(y^k-y^\star) + \varepsilon_k.
\end{align*}
Multiplying both sides with $A_{k+1}$ we have since $a_{k+1}= A_{k+1} - A_k$
\begin{align*}
    A_{k+1}\big(\psi(y^{k+1})-\psi(y^\star)\big) \leq A_{k}\big(\psi(y^k)-\psi(y^\star)\big) +  \tfrac{a_{k+1}^{p+1}}{A_{k+1}^p}(\lambda \star\phi)(y^k-y^\star) + \varepsilon_k A_{k+1}.
\end{align*}
Summing the inequality from $k=0$ to $k=K-1$ we obtain since $A_0 = 0$:
\begin{align} \label{eq:decrease_inexact_ppa_main}
    A_{K}\big(\psi(y^{K}) - \psi(y^\star)\big) \leq \sum_{k=0}^{K-1} \tfrac{a_{k+1}^{p+1}}{A_{k+1}^p}(\lambda \star \phi)(y^k-y^\star) + \varepsilon_k A_{k+1}.
\end{align}
From \cref{thm:decrease_inexact_ppa} for $w=y=y^k$ we have $\psi(y^{k+1}) \leq \psi(y^k)+ \varepsilon_k.$
Summing the inequality from $k=0$ to $k=K-1$ we obtain $
    \psi(y^{K})-\psi(y^0) \leq \sum_{k=0}^{K-1}\varepsilon_k \leq \beta,$
and hence $\|y^K-y^\star\| \leq D_\beta$ for any $K \geq 0$. This implies that $\phi(y^k-y^\star) =\tfrac{1}{p+1}\|y^\star - y^k\|^{p+1} \leq \tfrac{1}{p+1}D_\beta^{p+1}$.
Thus we can further bound \cref{eq:decrease_inexact_ppa_main}:
\begin{align*} 
    A_{K}\big(\psi(y^{K}) - \psi(y^\star)\big) \leq \lambda^{-p}\tfrac{1}{p+1}D_\beta^{p+1} \sum_{k=1}^{K} \tfrac{a_{k}^{p+1}}{A_{k}^p} + \varepsilon_k A_{k+1}.
\end{align*}
We choose $A_{K}=K^{p+1}$ and by using the fact that $\sum_{k=1}^{K} \tfrac{a_{k}^{p+1}}{A_{k}^p} \leq (p+1)^{p+1}K$ we get:
\begin{align*} 
    A_{K}\big(\psi(y^{K}) - \psi(y^\star)\big) \leq \lambda^{-p}D_\beta^{p+1} (p+1)^{p}K + c K.
\end{align*}
Dividing by $A_K$ we obtain $\psi(y^{K}) - \psi(y^\star) \leq \tfrac{\lambda^{-p}D_\beta^{p+1} (p+1)^{p} + c}{K^p}.$
\ifx\ifsvjour\true
\qed
\fi
\end{proof}
\fi



\subsection{Fast global convergence under \texorpdfstring{$\varepsilon$}{ε}-proximal points}
\label{subsec:anisotropic_env}
The algorithm studied in the previous section incorporates error tolerances via the notion of $\varepsilon$-subgradients and relies on the cost function $\psi$ as a merit function which is potentially extended-valued. As a consequence, the next iterate has to stay within the domain of $\psi$ which can be restrictive in practice. In this section we therefore consider a more relaxed error tolerance that requires the next iterate $y^{k+1}$ to stay close to the true proximal point $\aprox{\lambda\star\phi}{\psi}(y^k)$, referred to as an $\varepsilon$-proximal point of $\psi$ at $y^k$:
\begin{align} \label{eq:env_error_cond}
    \lambda \star \phi(y^{k+1} - \aprox{\lambda\star\phi}{\psi}(y^k)) \leq \varepsilon_k,
\end{align}
where $\varepsilon_k \geq 0$ and $\sum_{k=0}^\infty \varepsilon_k < \infty$.
Note that this error criterion generalizes the classical inexactness conditions studied in \cite{luque1984asymptotic}. 
For that purpose we leverage the real-valued and smooth anisotropic Moreau envelope \cite{laude2021conjugate,laude2025anisotropic} as a merit function and interpret the power proximal point method in terms of the anisotropic gradient descent method \cite{laude2025anisotropic} applied to the merit function. 

We define the left anisotropic Moreau envelope adapted from \cite[Definition 3.7]{laude2025anisotropic} which is given as the value function of the power proximal point update \cref{eq:ppa}: For $\lambda > 0$ and $y \in \bR^m$ the \emph{left anisotropic Moreau envelope} or in short anisotropic envelope of $\psi$ with parameter $\lambda$ at $y$ is defined as
    \begin{align} \label{eq:anisotropic_envelope}
        \aenv{\lambda\star\phi}{\psi}(y) := \inf_{\eta \in \bR^m} \{\psi(\eta) + \lambda \star \phi(y-\eta)\}.
    \end{align}
Thanks to \cite[Lemma 3.9(ii)]{laude2025anisotropic} we have that
\begin{align} \label{eq:gradient_formula}
\nabla \aenv{\lambda\star\phi}{\psi}(y^k) = \nabla \phi(\lambda^{-1}(y^k - \aprox{\lambda\star\phi}{\psi}(y^k))).
\end{align}
Rearranging the identity reveals that the exact proximal point scheme \cref{eq:ppa} is equivalent to the anisotropic gradient descent method \cite[Algorithm 1]{laude2025anisotropic} applied to the anisotropic envelope of $\psi$, since:
\begin{align} \label{eq:gradient_descent_update}
    y^{k+1} = \aprox{\lambda\star\phi}{\psi}(y^k) =y^k - \lambda \nabla \phi^* (\nabla \aenv{\lambda\star\phi}{\psi}(y^k)).
\end{align}
To prove the main convergence result of this subsection we exploit \cite[Proposition 4.1]{laude2025anisotropic} which reveals that the anisotropic Moreau envelope $\aenv{\lambda\star\phi}{\psi}$ has the anisotropic descent property \cite[Definition 3.1]{laude2025anisotropic} with reference function $\phi$:
\begin{definition}
    Let $h \in \mathcal{C}(\bR^m)$ and $\phi=\tfrac{1}{p+1}\|\cdot\|^{p+1}$. We say that $h$ satisfies the anisotropic descent property relative to $\phi$ with constant $L$ if for all $\eta \in \bR^m$
    \begin{equation}\label{eq:adescent}
h(y) \leq h(\eta) + \tfrac{1}{L} \star \phi(y-\eta + L^{-1}\nabla \phi^*(\nabla h(\eta))) - \tfrac{1}{L} \star \phi(L^{-1}\nabla \phi^*(\nabla h(\eta))).
\end{equation}
\end{definition}
Note that whenever $p=1$, and $\|\cdot\|=\|\cdot\|_2$ we obtain the classical Euclidean descent lemma by expanding the square.
Based on the anisotropic descent inequality above we can prove the following theorem:
\begin{theorem} \label{lemma:env_convergence}
Let  $\{y^{k}\}_{k \in \bN_0}$ be any sequence that satisfies \cref{eq:env_error_cond}. Then the following statments hold true:
\begin{thmenum}
    \item \label{lemma:env_convergence:ineq} $
    \aenv{\lambda\star\phi}{\psi}(y^{k+1}) -\aenv{\lambda\star\phi}{\psi}(y^k)\leq-\lambda \star \phi(y^k - \aprox{\lambda\star\phi}{\psi}(y^k)) + \varepsilon_k.$
    \item \label{lemma:env_convergence:diff} If $\{\varepsilon_k\}_{k \in \bN_0}$ is summable with $\sum_{k=0}^\infty \varepsilon_k \leq \alpha$, for $\alpha \geq 0$, and the set of solutions $Y^\star = \argmin \psi$ is nonempty, then $\|y^{k+1}-y^k\| \to 0.$
    \item \label{lemma:env_convergence:convergence}If $Y^\star$ is bounded, then $\{y^k\}_{k \in \bN_0}$ remains bounded and all of its limit points lie in $Y^\star$. Moreover, if $\varepsilon_k = \tfrac{c}{(k+1)^{p+1}}$ for some $c \geq 0$, then for all $k \geq 1$
     \begin{align} 
        \aenv{\lambda\star\phi}{\psi}(y^k) - \aenv{\lambda\star\phi}{\psi}(y^\star) \leq \tfrac{\lambda^{-p} D_\beta^{p+1} (p^2+p)^{p} + c}{k^p}.
    \end{align}
\end{thmenum}

\end{theorem}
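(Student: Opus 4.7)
The overarching strategy is to interpret \cref{eq:inexact_ppa_y_anchor} as an inexact anisotropic gradient descent step applied to the smooth merit function $h := \aenv{\lambda\star\phi}{\psi}$. Combining \cite[Proposition 4.1]{laude2025anisotropic} with the identity $\lambda\nabla\phi^*(\nabla h(\eta)) = \eta - \aprox{\lambda\star\phi}{\psi}(\eta)$, which follows from \cref{eq:gradient_formula}, one first obtains a sharp anisotropic descent inequality: for any $y, \eta \in \bR^m$ and $z^\eta := \aprox{\lambda\star\phi}{\psi}(\eta)$,
\[
h(y) \leq h(\eta) + \lambda\star\phi(y - z^\eta) - \lambda\star\phi(\eta - z^\eta).
\]
Setting $\eta = y^k$, $y = y^{k+1}$ and invoking the error bound \cref{eq:env_error_cond} then yields (i) directly.

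For (ii), telescoping (i) over $k=0,\ldots,K-1$ and using $\inf h = \inf \psi = \psi(y^\star)$ gives $\sum_{k=0}^{\infty} \lambda\star\phi(y^k - z^k) < \infty$, hence $\|y^k - z^k\| \to 0$. Together with $\lambda\star\phi(y^{k+1} - z^k) \leq \varepsilon_k \to 0$ and the triangle inequality this proves $\|y^{k+1} - y^k\| \to 0$. For the qualitative part of (iii), boundedness of $Y^\star$ is equivalent to coercivity of $\psi$, which transfers to $h$ via the definition of the envelope; the telescoped bound $h(y^k) \leq h(y^0) + \alpha$ then confines $\{y^k\}$ to a bounded sublevel set of $h$. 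For cluster points, I would observe that $v^k := \lambda^{-p}\nabla\phi(y^k - z^k) \in \partial\psi(z^k)$ is a true subgradient at $z^k$ by the optimality condition of the anisotropic prox, and $v^k \to 0$; along any subsequence $y^{k_j} \to \bar y$, also $z^{k_j} \to \bar y$, so closedness of $\gph\partial\psi$ yields $0 \in \partial\psi(\bar y)$, i.e., $\bar y \in Y^\star$.

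The main obstacle lies in establishing the sublinear rate for the suboptimality gap in $h$, for which I would adapt the telescoping argument of \cref{thm:inexact_ppa_rate:simple} to the smooth merit function. Multiplying (i) by $A_{k+1} := (k+1)^{p+1}$ and writing $A_{k+1} = A_k + a_{k+1}$, the key quantity to control is $a_{k+1}(h(y^k) - h(y^\star)) - A_{k+1}\lambda\star\phi(y^k - z^k)$. By convexity of $h$, the identity $\nabla h(y^k) = \lambda^{-p}\nabla\phi(y^k - z^k)$ from \cref{eq:gradient_formula}, and the homogeneity relation $\|\nabla\phi(u)\|_* = \|u\|^p$, one gets $h(y^k) - h(y^\star) \leq \lambda^{-p}\|y^k - z^k\|^p D_\beta$, with $\|y^k - y^\star\| \leq D_\beta$ holding on the appropriate enlargement of the initial level set. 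Parametrizing by $u := \|y^k - z^k\|$ and maximizing the polynomial $a_{k+1}\lambda^{-p}u^p D_\beta - \tfrac{A_{k+1}\lambda^{-p}}{p+1}u^{p+1}$ over $u \geq 0$, whose maximum is attained at $u^\star = p a_{k+1} D_\beta / A_{k+1}$, produces the bound $\tfrac{p^p}{p+1}\lambda^{-p}a_{k+1}^{p+1}D_\beta^{p+1}/A_{k+1}^p$. A final telescoping, the estimate $\sum_{k=1}^K a_k^{p+1}/A_k^p \leq (p+1)^{p+1}K$ from \cite[Equation 35]{doikov2020inexact}, the identity $A_{k+1}\varepsilon_k = c$, and division by $A_K = K^{p+1}$ then deliver the claimed rate with leading constant $p^p(p+1)^p = (p^2+p)^p$.
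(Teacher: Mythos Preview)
Your proof is correct and follows the paper's overall strategy: anisotropic descent from \cite[Proposition~4.1]{laude2025anisotropic} for (i), telescoping plus the error bound for (ii), and a level-set/telescoping argument for the rate in (iii).

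The only noteworthy difference is in how you extract the $(p^2+p)^p$ constant for the rate. The paper first applies H\"older and then Young's inequality with a carefully chosen parameter $\sigma = p^{1/(q+1)}$ to obtain the intermediate inequality
\[
h(y^k) \leq h(y) + \lambda\star\phi(y^k - z^k) + p^p\,\lambda\star\phi(y - y^k)
\]
valid for \emph{all} $y$, combines this with (i), and finally specializes to the convex combination $y = \tfrac{a_{k+1}y^\star + A_k y^k}{A_{k+1}}$. You instead fix $y=y^\star$ from the outset, bound $h(y^k)-h(y^\star)\leq \lambda^{-p}u^p D_\beta$ via H\"older, and directly maximize the scalar polynomial $a_{k+1}\lambda^{-p}u^pD_\beta - \tfrac{A_{k+1}\lambda^{-p}}{p+1}u^{p+1}$ over $u\geq 0$. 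The two routes are equivalent (Young's inequality with the paper's $\sigma$ is exactly the dual of your polynomial maximization), but yours is a bit more elementary and avoids the convex-combination trick; the paper's version, on the other hand, produces a reusable inequality in the free variable $y$. Your cluster-point argument via closedness of $\gph\partial\psi$ at $z^k$ is also a valid alternative to the paper's use of $\nabla h(y^k)\to 0$.
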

\ifx\ifsvjour\true
A proof is included in \cref{sec-appd:env_convergence}.
\fi
\ifx\ifsvjour\false
\begin{proof}
``\labelcref{lemma:env_convergence:ineq}'':
By \cite[Proposition 4.1]{laude2025anisotropic}, $\aenv{\lambda\star\phi}{\psi}(y)$ is anisotropically smooth relative to $\phi$ with constant $\tfrac{1}{\lambda}$. Hence the anisotropic descent inequality \cref{eq:adescent} at points $y = y^{k+1}$ and $\eta = y^k$ yields via the relation \cref{eq:gradient_formula}:
\begin{align} \label{eq:descent_ineq_env} \nonumber
\aenv{\lambda\star\phi}{\psi}(y^{k+1}) 
&\leq \nonumber \aenv{\lambda\star\phi}{\psi}(y^{k}) + \lambda \star \phi\big(y^{k+1} - y^k + \lambda \nabla \phi^*(\nabla \phi(\lambda^{-1}(y^k - \aprox{\lambda\star\phi}{\psi}(y^k))))\big)  
\\&\qquad -\lambda \star \phi\big(\lambda \nabla \phi^*(\nabla \phi(\lambda^{-1}(y^k - \aprox{\lambda\star\phi}{\psi}(y^k))))\big)
\nonumber
\\ &\leq \nonumber \aenv{\lambda\star\phi}{\psi}(y^{k}) + \lambda \star \phi(y^{k+1} - y^k + y^k - \aprox{\lambda\star\phi}{\psi}(y^k)) - \lambda \star \phi(y^k - \aprox{\lambda\star\phi}{\psi}(y^k)) \nonumber \\
&\leq \aenv{\lambda\star\phi}{\psi}(y^{k}) + \varepsilon_k -\lambda \star \phi(y^k - \aprox{\lambda\star\phi}{\psi}(y^k)), 
\end{align}
where the last inequality follows by the error bound \eqref{eq:env_error_cond}.

``\labelcref{lemma:env_convergence:diff}'': Since $\argmin \phi = \{0\}$, we have that $\aenv{\lambda\star\phi}{\psi}(y^\star) = \psi(y^\star)$ and as such that $\psi^\star \leq \aenv{\lambda\star\phi}{\psi}(y)$ for all $y \in \bR^m$.
Thus by summing \cref{eq:descent_ineq_env} form $k=0$ to $K$, we get:
\begin{align*}
-\infty &< \psi(y^\star) -\aenv{\lambda\star\phi}{\psi}(y^0) \leq \aenv{\lambda\star\phi}{\psi}(y^{K+1}) -\aenv{\lambda\star\phi}{\psi}(y^0) \\
&\leq- \sum_{k=0}^K \lambda \star \phi(y^k - \aprox{\lambda\star\phi}{\psi}(y^k)) + \sum_{k=0}^K \varepsilon_k \leq - \sum_{k=0}^K \lambda \star \phi(y^k - \aprox{\lambda\star\phi}{\psi}(y^k)) + \alpha,
\end{align*}
where the last inequality follows by the assumed bound on $\varepsilon_k$.
This result readily implies that $\lambda \star \phi(y^k - \aprox{\lambda\star\phi}{\psi}(y^k))$ is summable and therefore
$
\lambda \star \phi(y^k - \aprox{\lambda\star\phi}{\psi}(y^k)) \to 0.
$
Moreover, from \cref{eq:env_error_cond} we have that
$
0 \leq \lambda \star \phi(y^{k+1} - \aprox{\lambda\star\phi}{\psi}(y^k))  \leq \varepsilon_k \to 0.
$
Recall that $\lambda \star \phi = \lambda^{-p}\frac{1}{p+1}\|\cdot\|^{p+1}$.
Hence, we can combine these two convergence results to show the claimed relationship, using the triangle inequality:
$$
\|y^k - y^{k+1}\| \leq \|y^k - \aprox{\lambda\star\phi}{\psi}(y^k) \| + \|y^{k+1} - \aprox{\lambda\star\phi}{\psi}(y^k)\| \to 0.
$$

``\labelcref{lemma:env_convergence:convergence}'':
Regarding the boundedness of the iterates we follow the proof of \cref{thm:inexact_ppa_rate:simple}. We have that for any $k \in \bN_0$
$
    \aenv{\lambda\star\phi}{\psi}(y^{k+1}) -\aenv{\lambda\star\phi}{\psi}(y^0) \leq  \alpha.
$
We define the diameter of the enlarged initial level-set of the envelope function:
\begin{align} \label{eq:env_level_sets}
    D_\alpha := \sup \{ \|y^\star - y\| : \aenv{\lambda\star\phi}{\psi}(y) \leq \aenv{\lambda\star\phi}{\psi}(y^0) + \alpha\}.
\end{align}
Since $\aenv{\lambda\star\phi}{\psi}$ is convex and $Y^\star$ is bounded, $D_\alpha$ is also bounded. Therefore, $\|y^k - y^\star\| \leq D_\alpha$ and this implies that $\{y^k\}_{k \in \bN_0}$ remains bounded. The subsequential convergence to $Y^\star$ follows from the fact that $\nabla \aenv{\lambda\star\phi}{\psi}(y^k) \to 0$. 

Now, we can turn our attention to the convergence rate of the anisotropic envelope of $\psi$. Using the convexity inequality, we get:
\begin{align*}
    \aenv{\lambda\star\phi}{\psi}(y) \geq \aenv{\lambda\star\phi}{\psi}(y^k) + \langle \nabla \aenv{\lambda\star\phi}{\psi}(y^k),y-y^k \rangle
\end{align*}
Rearranging and substituting \cref{eq:gradient_formula}, we have that
\begin{align*}
    \aenv{\lambda\star\phi}{\psi}(y^k) &\leq \aenv{\lambda\star\phi}{\psi}(y) - \lambda^{-p} \langle \nabla \phi(y^k - \aprox{\lambda\star\phi}{\psi}(y^k)),y-y^k \rangle \\
    & \leq \aenv{\lambda\star\phi}{\psi}(y) + \lambda^{-p} (\tfrac{1}{\sigma}\|\nabla \phi(y^k - \aprox{\lambda\star\phi}{\psi}(y^k))\|_*) (\sigma\|y-y^k\|),
\end{align*}
for any $\sigma > 0$, where the last inequality follows by H\"older's inequality. We can further bound the r.h.s. of the previous inequality, by using Young's inequality with exponents $q+1$ and $p+1$, the fact that $\|\nabla \phi(y)\|_* = \|y\|^p$ and $p(q+1) = p+1$:
\begin{align*}
    \aenv{\lambda\star\phi}{\psi}(y^k) \leq \aenv{\lambda\star\phi}{\psi}(y) + (\tfrac{1}{\sigma})^{q+1}\tfrac{\lambda^{-p}}{q+1}\|y^k - \aprox{\lambda\star\phi}{\psi}(y^k)\|^{p+1} + \sigma^{p+1}\tfrac{\lambda^{-p}}{p+1}\|y-y^k\|^{p+1}.
\end{align*}
Let us choose $\sigma = (\tfrac{p+1}{q+1})^{\tfrac{1}{q+1}} = p^{\tfrac{1}{q+1}}$. Then we have since $(\tfrac{1}{\sigma})^{q+1}\tfrac{1}{q+1} = \frac{1}{p+1}$ and $\sigma^{p+1} = p^p$:
\begin{align*}
    \aenv{\lambda\star\phi}{\psi}(y^k) \leq \aenv{\lambda\star\phi}{\psi}(y) + \lambda \star \phi(y^k - \aprox{\lambda\star\phi}{\psi}(y^k)) + p^p\lambda \star \phi(y - y^k),
\end{align*}
and by putting this result back in \cref{eq:descent_ineq_env} we obtain: 
\begin{align} \label{eq:env_ineq_rate}
    \aenv{\lambda\star\phi}{\psi}(y^{k+1}) \leq \aenv{\lambda\star\phi}{\psi}(y) + \varepsilon_k + p^p \lambda \star \phi(y - y^k).
\end{align}
The rest of the proof is similar to that of \cref{thm:inexact_ppa_rate:simple}. More specifically, if we choose $y := \tfrac{a_{k+1} y^\star + A_k y^k}{A_{k+1}}$ in \cref{eq:env_ineq_rate} and use the convexity inequality as well as the fact that $\phi$ is  positively homogeneous with order $p+1$, we have that
\begin{align*}
    \aenv{\lambda\star\phi}{\psi}(y^{k+1}) \leq \tfrac{a_{k+1}}{A_{k+1}}\aenv{\lambda\star\phi}{\psi}(y^\star) + \tfrac{A_k}{A_{k+1}}\aenv{\lambda\star\phi}{\psi}(y^k) + \varepsilon_k + \tfrac{a_{k+1}^{p+1}}{A_{k+1}^{p+1}}p^p\lambda \star\phi(y^k-y^\star)
\end{align*}
Multiplying the inequality above with $A_{k+1}>0$, rearranging and summing from $k=0$ to $k=K-1$, we get:
\begin{align} \label{eq:decrease_inexact_ppa_env}
    A_{K}\big(\aenv{\lambda\star\phi}{\psi}(y^K) - \aenv{\lambda\star\phi}{\psi}(y^\star) \big) \leq \sum_{k=0}^{K-1} \tfrac{a_{k+1}^{p+1}}{A_{k+1}^p} p^p \lambda \star \phi(y^k-y^\star) + \varepsilon_k A_{k+1}.
\end{align}
If we choose $A_K = K^{p+1}$ and use the fact that $\sum_{k=1}^{K} \tfrac{a_{k}^{p+1}}{A_{k}^p} \leq (p+1)^{p+1}K$ we have that
$
    A_{K}\big(\aenv{\lambda\star\phi}{\psi}(y^{k}) - \aenv{\lambda\star\phi}{\psi}(y^\star) \big) \leq p^p D_\alpha^{p+1} (p+1)^{p}K + c K,
$
since $\|y^k-y^\star\|\leq D_\alpha$. Dividing now with $A_K$, we obtain the claimed result.
\ifx\ifsvjour\true
\qed
\fi
\end{proof} 
\fi

\subsection{Local convergence with higher order under function growth conditions}
\label{subsec:local_rate}
In this  subsection we establish the higher order of local convergence of the power proximal point scheme under a mild growth condition of the objective $\psi$.

In the following, let $Y^\star$ denote the set of minimizers of $\psi$, which we assume is nonempty and compact, and $\psi^\star$ be the minimum value of $\psi$. Let $d(y, Y^\star) := \inf_{\eta \in Y^\star}\|y - \eta\|_r$ for $r \in \{2, p+1\}$ denote the distance of $y$ to $Y^\star$. For brevity we again omit $r$.
The following result captures the order and rate of convergence of the algorithm with exact updates. A similar result is provided in \cite[Proposition 5.20]{bertsekas2014constrained} for the Augmented Lagrangian method. 
\begin{theorem} \label{thm:superlinear_rate}
Let $\{y^{k}\}_{k \in \bN_0}$ be the sequence of iterates generated by \cref{alg:inexact_pppa_averaging} for $\theta_k = 1$, $\varepsilon_k =0$. Moreover assume that $Y^\star$ is compact and there exists a $\delta$-neighborhood $\mathcal{N}(Y^\star, \delta):=\{y : d(y,Y^\star) < \delta\}$ of $Y^\star$ such that $\psi(y) - \psi^\star  \geq  \mu d^\nu(y, Y^\star)$, with $\nu > 1$, for all $y \in \mathcal{N}(Y^\star, \delta)$. Let $p > \nu - 1$. Then, $\{d(y^k, Y^\star)\}_{k \in \bN_0}$ converges locally with order $\omega := \tfrac{p}{\nu - 1} > 1$ and rate $\alpha :=(\frac{1}{\lambda \mu^q})^\omega$, i.e., for $k$ sufficiently large we have
\begin{align} \label{eq:superlinear_rate_p_norm}
    \frac{d(y^{k+1}, Y^\star)}{d^\omega(y^k, Y^\star)} \leq \alpha.
\end{align}
\end{theorem}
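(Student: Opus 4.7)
The plan is to combine the first-order optimality of the exact power-proximal step with the local growth condition on $\psi$. Since $\theta_k=1$ and $\varepsilon_k=0$, \cref{alg:inexact_pppa_averaging} produces a pair $(y^{k+1},v^k)$ satisfying the exact system \cref{eq:exact_alg_update_y_full}, and positive homogeneity of $\nabla\phi$ of degree $p$ gives $v^k=\lambda^{-p}\nabla\phi(y^k-y^{k+1})\in\partial\psi(y^{k+1})$. Denoting by $\bar y^{k+1}\in Y^\star$ a nearest point to $y^{k+1}$ (which exists by compactness of $Y^\star$), the subgradient inequality followed by H\"older's inequality yields $\psi(y^{k+1})-\psi^\star\leq\|v^k\|_*\,d(y^{k+1},Y^\star)$. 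Since $\phi=\tfrac{1}{p+1}\|\cdot\|^{p+1}$ satisfies the norm identity $\|\nabla\phi(z)\|_*=\|z\|^p$, this specializes to
\begin{equation*}
\psi(y^{k+1})-\psi^\star\leq\lambda^{-p}\|y^k-y^{k+1}\|^p\,d(y^{k+1},Y^\star).
\end{equation*}

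Next I would invoke the growth condition, valid once $y^{k+1}\in\mathcal{N}(Y^\star,\delta)$, to lower bound the left-hand side by $\mu\,d^\nu(y^{k+1},Y^\star)$, and after dividing by $d(y^{k+1},Y^\star)$ obtain the key estimate $\mu\,d^{\nu-1}(y^{k+1},Y^\star)\leq\lambda^{-p}\|y^k-y^{k+1}\|^p$. To express $\|y^k-y^{k+1}\|$ in terms of $d(y^k,Y^\star)$, I would test the optimality of the proximal minimizer $y^{k+1}$ against a nearest point $\bar y^k\in Y^\star$ of $y^k$: the very definition of the prox gives
\begin{equation*}
\psi(y^{k+1})+\lambda\star\phi(y^k-y^{k+1})\leq\psi^\star+\lambda\star\phi(y^k-\bar y^k),
\end{equation*}
and discarding the nonnegative term $\psi(y^{k+1})-\psi^\star$ yields $\|y^k-y^{k+1}\|\leq d(y^k,Y^\star)$. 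Substituting back and using the conjugacy identity $pq=1$ (equivalent to $\tfrac{1}{p+1}+\tfrac{1}{q+1}=1$) rewrites the resulting prefactor $(\lambda^p\mu)^{-1/(\nu-1)}$ as $\lambda^{-\omega}\mu^{-q\omega}=\alpha$, delivering exactly \cref{eq:superlinear_rate_p_norm}.

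The remaining point is to ensure that the local hypothesis $y^{k+1}\in\mathcal{N}(Y^\star,\delta)$ holds on the tail of the sequence so that the growth condition is applicable. I would deduce this from the global sublinear rate in \cref{thm:inexact_ppa_rate:simple} applied with $c=0$, which forces $\psi(y^k)\to\psi^\star$; combining this with compactness of $Y^\star$ and the growth bound once inside $\mathcal{N}(Y^\star,\delta)$ gives $d(y^k,Y^\star)\to 0$. Then for $k$ large enough, say $d(y^k,Y^\star)<\delta/2$, the step-size bound $\|y^{k+1}-y^k\|\leq d(y^k,Y^\star)$ combined with the triangle inequality keeps $y^{k+1}$ within $\mathcal{N}(Y^\star,\delta)$, so the superlinear estimate propagates along the tail.

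The main obstacle is precisely this transition to the local regime, because the non-Euclidean power prox-term prevents us from using the standard nonexpansiveness-based Fej\'er-monotonicity arguments available for $\|\cdot\|_2^2$. The substitute throughout is the optimality inequality $\|y^k-y^{k+1}\|\leq d(y^k,Y^\star)$ derived from the definition of the proximal step, which lets us propagate the local contraction without invoking any metric contraction of the prox operator itself.
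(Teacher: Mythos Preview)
Your argument is correct and matches the paper's almost step for step: the subgradient inequality plus H\"older, the prox-optimality test against a nearest point of $y^k$ in $Y^\star$ to get $\|y^k-y^{k+1}\|\leq d(y^k,Y^\star)$, and the growth condition combine exactly as you describe, with the same algebra for the constant $\alpha$. The only minor deviation is in how you reach the local regime---the paper invokes \cref{lemma:env_convergence} rather than \cref{thm:inexact_ppa_rate:simple}, and your appeal to ``the growth bound'' in that step is unnecessary (and slightly circular as phrased): what actually does the work is that compactness of $Y^\star$ forces level-boundedness of $\psi$, so $\{y^k\}$ is bounded and lower semicontinuity places every limit point in $Y^\star$.
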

\ifx\ifsvjour\true
A proof is included in \cref{sec-appd:prf-superlinear}.
\fi
\ifx\ifsvjour\false
\begin{proof}
    For a vector $y \in \bR^m$, we denote by $y_\star$ its unique projection on $Y^\star$, i.e. $y_\star = \argmin_{\eta \in Y^\star}\|y-\eta\|$. First, let us remark that since we have assumed the exact execution of the algorithm, the results of \cref{lemma:env_convergence} hold. More specifically, since $Y^\star$ is assumed compact, every subsequence of $\{y^k\}_{k \in \bN_0}$ converges to some point in $Y^\star$ and thus $d(y^k, Y^\star) \to 0$. Therefore, there exists a $K \geq 0$ such that for all $k \geq K$, $y^{k+1} \in \mathcal{N}(Y^\star, \delta)$. Hence we have that 
    $\mu d
    ^\nu(y^{k+1}, Y^\star) \leq \psi(y^{k+1})-\psi^\star$. By \cref{eq:exact_alg_update_y_full} we have that $-\lambda^{-p}\nabla \phi(y^{k+1} - y^k) \in \partial \psi(y^{k+1})$ and hence using the convexity inequality we obtain that $\psi^\star \geq \psi(y^{k+1}) - \lambda^{-p}\langle\nabla \phi(y^{k+1} - y^k),y^{k+1}_\star - y^{k+1} \rangle$. Thus we have:
    \begin{align}
    \mu d^\nu(y^{k+1}, Y^\star) &\leq -\psi^\star + \psi(y^{k+1}) \nonumber\\
    &\leq
    \lambda^{-p} \langle \nabla \phi(y^{k+1} - y^k), y^{k+1}_\star -y^{k+1}\rangle \nonumber \\
    & \leq \lambda^{-p} \|\nabla \phi(y^{k+1} - y^k)\|_{*}\|y^{k+1}-y^{k+1}_\star\| \nonumber \\
    &= \lambda^{-p} \|y^{k+1} - y^k\|^p d(y^{k+1}, Y^\star), \label{eq:superlinear_bound_p_norm}
    \end{align}
    where the third inequality follows from H\"older's inequality and the equality follows from the fact that $\|\nabla \phi(x)\|_{*} = \|x\|^p$ for any $x \in \bR^m$. Now, for the update of the algorithm we have that $y^{k+1} = \argmin_{y \in \bR^m}\{\psi(y) + \lambda \star \phi(y-y^k)\}$ and hence:
\begin{align*}
    \lambda \star \phi(y^{k+1} - y^k) \leq \psi(y)-\psi(y^{k+1}) + \lambda \star \phi(y^k - y), \qquad \forall y \in \bR^m.
\end{align*}
Thus, by choosing $y = y^k_\star \in Y^\star$, since $\psi^\star - \psi(y^{k+1})\leq 0$ we obtain since $\lambda \star \phi = \lambda^{-p}\|\cdot\|^{p+1}$:
\begin{align*}
    \|y^{k+1} - y^k\| \leq \|y^k - y^k_\star\| = d(y^k, Y^\star).
\end{align*}
substituting the inequality above in \cref{eq:superlinear_bound_p_norm}, we get:
\begin{align*}
    d^{\nu-1}(y^{k+1}, Y^\star) \leq \tfrac{1}{\lambda^{p} \mu}d^p(y^k, Y^\star)
\end{align*}
and by taking both sides to the power of $\frac{1}{\nu-1}$ we obtain the desired result noting that $\tfrac{1}{\lambda^{p} \mu}^{\frac{1}{\nu -1}} = (\tfrac{1}{\lambda \mu^{q}})^{\omega}$ and $\frac{1}{p}=q$.
\ifx\ifsvjour\true
\qed
\fi
\end{proof}
\fi

Although \cref{thm:superlinear_rate} describes the local convergence of the exact version of the algorithm, the extension to the inexact case is straightforward due to the equivalence of the norms in $\bR^m$. More specifically, one could follow the analysis of \cite{luque1984nonlinear} and further specify the inexactness condition \cref{eq:env_error_cond} as follows:
$
    \|y^{k+1}- \aprox{\lambda\star\phi}{\psi}(y^k))\| \leq \varepsilon_k \min\{1, \|y^{k+1}-y^k\|^t\},
$
for some $t > 0$ and thus obtain an order of convergence of at least $\min\{t, \tfrac{1}{q(\nu - 1)}\}$. It is important to note that for $\phi = \tfrac{1}{p+1}\|\cdot\|_2^{p+1}$ as in \cite{luque1984nonlinear}, it is not necessary to assume that $Y^\star$ is compact. To the best of our knowledge, however, such a condition cannot be avoided when dealing with $(p+1)$-norms.

\section{The inexact power augmented Lagrangian method}\label{section:ALM}
In this section we specialize the results of the inexact power proximal point method to the augmented Lagrangian setting as described in \cref{sec:power_alm}. In particular, we provide implementable stopping criteria for the primal update $x^{k+1}$ that guarantee that the dual update $y^{k+1}$ is within the error tolerance of the inexact power proximal point method when applied to the Lagrangian dual function. Furthermore, we show that the ergodic primal-dual gap vanishes at the same rate as the dual suboptimality gap established in the previous section and provide global convergence results for important quantities in the constrained optimization framework. 
\subsection{Inexactness criteria}
Recall that $\psi := -\varrho$ is the negative Lagrangian dual function $\varrho$ which is defined in \cref{eq:dual_function}.
This subsection describes stopping criteria for the primal update 
\begin{align} \label{eq:general_inexact_primal}
    x^+ \approx \argmin_{x \in \bR^n} L_\lambda(x, y),
\end{align}
which guarantee that the corresponding dual update
\begin{align} \label{eq:general_inexact_dual}
    y^+ = y + \lambda \nabla \phi^*(\nabla_y L_\lambda(x^+, y)). 
\end{align}
is within the error tolerance for the inexact power proximal point scheme applied to $\psi$.
For that purpose we prove the following result:
\begin{proposition} \label{thm:errors_alm}
The following properties hold for $x^+$ and $y^+$:
\begin{propenum}
    \item \label{thm:errors_alm:al} Let $x^+$ such that $L(x^+, y^+) \leq \inf L(\cdot, y^+) + \varepsilon$. Then $-\nabla_y L_\lambda(x^+, y) \in \partial_{\varepsilon} \psi(y^+)$.
    
    \item \label{thm:errors_prox_alm:bound_alm}
    Let $x^+$ such that $L_\lambda(x^+, y) \leq \inf L_\lambda(\cdot, y) + \varepsilon$. Then $y^+$ is an $2^{p-1}\varepsilon$-proximal point, i.e., 
    $
    \lambda \star \phi(y^+ - \aprox{\phi}{\psi}(y)) \leq 2^{p-1}\varepsilon.
    $
    
    \item \label{thm:errors_prox_alm:grad_alm} Assume that $\dom f$ is bounded. Let $e \in \partial_x L_\lambda(x^+, y)$ such that $\|e\| \leq \varepsilon/D$, where $D:=\sup_{x,y \in \dom f} \|x - y\| < \infty$ is the diameter of $\dom f$. Then the hypotheses of the first and second item are valid.
\end{propenum}
\end{proposition}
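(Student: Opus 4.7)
For item~(i), the $\varepsilon$-subgradient characterization applied to $\psi = -\varrho$ amounts to proving
\[
\varrho(\eta) - \varrho(y^+) \leq \langle \nabla_y L_\lambda(x^+, y), \eta - y^+\rangle + \varepsilon \qquad \forall\, \eta \in \bR^m.
\]
The plan is to chain three inequalities: the weak bound $\varrho(\eta) \leq L(x^+, \eta)$, the concavity-supergradient inequality $L(x^+, \eta) - L(x^+, y^+) \leq \langle \nabla_y L_\lambda(x^+, y), \eta - y^+\rangle$, and the $\varepsilon$-optimality hypothesis $L(x^+, y^+) - \varrho(y^+) \leq \varepsilon$. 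The middle step follows from the envelope identity derived in \cref{sec:power_alm}: since $y^+$ is the exact maximizer of $\eta \mapsto L(x^+, \eta) - (\lambda\star\phi)(y - \eta)$, first-order optimality identifies $\nabla_y L_\lambda(x^+, y)$ as a supergradient of the concave map $L(x^+, \cdot)$ at $y^+$.

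For item~(ii), let $\bar y$ denote the exact prox of $\psi$ at $y$, i.e., the unique minimizer of $\psi(\cdot) + (\lambda\star\phi)(y - \cdot)$, and introduce the surrogate objective $\tilde\Psi(\eta) := -L(x^+, \eta) + (\lambda\star\phi)(y - \eta)$. By construction $y^+$ is the exact minimizer of $\tilde\Psi$ with $\tilde\Psi(y^+) = -L_\lambda(x^+, y)$. Applying the uniform-convexity inequality of \cref{thm:uniform_convexity} to $\lambda\star\phi$ around the minimizer $y^+$ yields
\[
\tilde\Psi(\bar y) - \tilde\Psi(y^+) \;\geq\; \tfrac{1}{2^{p-1}}\,(\lambda\star\phi)(y^+ - \bar y).
\]
The remaining task is to upper-bound the left-hand side by $\varepsilon$. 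I would combine two observations: (a) pointwise $-L(x^+, \cdot) \leq \psi(\cdot)$ since $\psi(\eta) = \sup_x -L(x,\eta)$, hence $\tilde\Psi(\bar y) \leq \psi(\bar y) + (\lambda\star\phi)(y - \bar y) = \aenv{\lambda\star\phi}{\psi}(y)$; and (b) the strong-duality identity $\inf_x L_\lambda(x, y) = -\aenv{\lambda\star\phi}{\psi}(y)$, obtained by swapping $\inf_x$ and $\sup_\eta$ in the saddle definition of $L_\lambda$, which combined with the $\varepsilon$-optimality hypothesis gives $\tilde\Psi(y^+) \geq \aenv{\lambda\star\phi}{\psi}(y) - \varepsilon$. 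Subtracting (b) from (a) then delivers $\tilde\Psi(\bar y) - \tilde\Psi(y^+) \leq \varepsilon$.

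For item~(iii), I would first note that $L_\lambda(\cdot, y)$ is convex as a supremum of convex functions, and that the inner supremum is attained at the unique point $y^+$ by strict convexity of $\lambda\star\phi$ for $p \geq 1$. Danskin's theorem then gives $\partial_x L_\lambda(x^+, y) = \partial_x L(x^+, y^+)$, so any $e \in \partial_x L_\lambda(x^+, y)$ with $\|e\| \leq \varepsilon/D$ lies in both subdifferentials simultaneously. The convex subgradient inequality combined with the diameter bound $\|x - x^+\| \leq D$ on $\dom f$ then yields $L_\lambda(x^+, y) - \inf_x L_\lambda(x, y) \leq \varepsilon$ and $L(x^+, y^+) - \inf_x L(x, y^+) \leq \varepsilon$ at the same time, matching the hypotheses of items~(ii) and~(i) respectively.

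The main obstacle is item~(ii): a direct bound on $\Psi(y^+) - \Psi(\bar y)$ for the true envelope objective $\Psi := \psi + (\lambda\star\phi)(y - \cdot)$ is not accessible, because $\psi(y^+)$ can far exceed $-L(x^+, y^+)$ while the $\varepsilon$-optimality of $L_\lambda$ only controls the surrogate $\tilde\Psi$. The remedy is to exploit strong convexity of $\tilde\Psi$ at its \emph{own} minimizer $y^+$ rather than at $\bar y$, and to sandwich via the pointwise bound $-L(x^+, \cdot) \leq \psi$ together with the strong-duality identity between $\inf_x L_\lambda(\cdot, y)$ and $-\aenv{\lambda\star\phi}{\psi}(y)$.
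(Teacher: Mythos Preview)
Your argument is correct and, for items~(i) and~(iii), essentially coincides with the paper's. The only difference in~(iii) is that you appeal to Danskin's theorem for the identity $\partial_x L_\lambda(x^+,y)=\partial_x L(x^+,y^+)$, whereas the paper derives it from the equivalent saddle-point characterization $(x^+,y^+)\in\argminimax_{x,\eta}\,L(x,\eta)-\langle e,x\rangle-(\lambda\star\phi)(y-\eta)$; the latter is self-contained and sidesteps the regularity hypotheses Danskin needs in the nonsmooth setting.

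For item~(ii) your route differs in organization. The paper starts from the concavity of $\eta\mapsto L_\lambda(x^+,\eta)$ at the anchor $y$, chains it with the strong-duality computation $\inf_x L_\lambda(x,\eta)=\max_{y'}\varrho(y')-(\lambda\star\phi)(\eta-y')$, obtains an inequality depending on a free variable $\eta$, and then maximizes over $\eta$ (the optimum being $\eta=y-y^++\bar y$) before invoking \cref{thm:uniform_convexity}. Your surrogate $\tilde\Psi=-L(x^+,\cdot)+(\lambda\star\phi)(y-\cdot)$ reaches the same endpoint more directly: uniform convexity of $\tilde\Psi$ at its exact minimizer $y^+$, together with the sandwich $-L(x^+,\cdot)\leq\psi$ and $\inf_x L_\lambda(\cdot,y)=-\aenv{\lambda\star\phi}{\psi}(y)$, gives the bound without the auxiliary optimization step. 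Both proofs rest on the same three ingredients (uniform convexity of $\phi$, strong duality in the inner saddle, the $\varepsilon$-optimality hypothesis); yours is a bit shorter, while the paper's keeps the augmented Lagrangian $L_\lambda$ in the foreground throughout.
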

A proof of this result is included in \cref{sec-append:prf-errors_alm}.

\Cref{thm:errors_prox_alm:grad_alm} suggests to verify whether the (sub)gradient norm is sufficiently small, i.e., $\|\nabla_x L_\lambda(x^+, y)\| \leq \varepsilon / D$, which is an easily verifiable condition. 
\Cref{thm:errors_alm:al} then ensures that \(- \nabla_y L_{\lambda}(x^+, y)\) is an \(\varepsilon\)-subgradient of \(\psi\) at \(y^+\).
Likewise, according to \cref{thm:errors_prox_alm:bound_alm} this condition implies that \(y^+\) is an $2^{p-1}\varepsilon$-proximal point.
Therefore, we can analyze the proposed inexact ALM through its equivalence with the inexact power proximal point method (\cref{alg:inexact_pppa_averaging}) to obtain convergence guarantees.

Regarding the assumption on the boundedness of the domain of $f$ in \cref{thm:errors_prox_alm:grad_alm}, we remark that it is a standard assumption in the context of ALM, especially when obtaining convergence rates \cite{nedelcu2014computational,liu2019nonergodic,xu2021iteration,kong2023iteration,necoara2019complexity} as it allows for implementable stopping criteria.

Based on the above result we can now specialize \cref{alg:inexact_pppa_averaging} to the inexact power augmented Lagrangian method listed in \cref{alg:inexact_al_averaging}.
\begin{algorithm}
\caption{Inexact power augmented Lagrangian method with averaging}
\label{alg:inexact_al_averaging}
\begin{algorithmic}
\REQUIRE Choose $y^0 \in \bR^m, x^0 \in \bR^n$ and a sequence $\{\theta_k\}_{k \in \bN_0}$ with $\theta_k \in (0, 1]$.
\FORALL{$k=0, 1, \dots$}
   \STATE Set $w^k = \theta_k y^k + (1 - \theta_k)y^0$.
   \STATE Choose $\varepsilon_{k} > 0$ and solve
   \begin{subequations}
   \begin{align} \label{eq:averaging_primal}
x^{k+1} &\approx \argmin_{x \in \bR^n} L_\lambda(x, w^k) \\
\label{eq:averaging_dual}
y^{k+1} &=  w^k + \lambda \nabla \phi^*(\nabla_y L_\lambda(x^{k+1}, w^k)).
\end{align}
\end{subequations}
\ENDFOR
\end{algorithmic}
\end{algorithm}

\subsection{Convergence of the primal-dual gap}
In this subsection we refine the analysis from the previous section in the primal-dual setup. Specifically, we show that the primal-dual gap decreases at the same rate as the dual sub\-optimality-gap; see \cref{thm:inexact_ppa_rate}. For that purpose we define for any $K\geq 1$ the ergodic primal iterate:
\begin{align} \label{eq:ergodic_iterates}
\breve x^K = \tfrac{\sum_{k=1}^{K} a_k x^k}{\sum_{k=1}^{K} a_k}.
\end{align}
In the following theorem we show that the ergodic primal-dual gap decreases at the same rate as the negative dual function.
The proof is
along the lines of the analysis in \cite{yang2020catalyst}, albeit adapted to the higher-order proximal-point setting where in general nonexpansiveness is not present and as such standard techniques are not directly applicable.
\begin{theorem}\label{thm:primal-dual-gap}
    Choose the sequence of errors $\{\varepsilon_k\}_{k \in \bN_0}$ such that $
        \varepsilon_k = \tfrac{c}{(k+1)^{p+1}},$
    for some $c\geq 0$. Let $(x^k,y^k)_{k \in \bN_0}$ be the sequence of iterates generated by \cref{alg:inexact_al_averaging} with $\theta_k := \tfrac{A_k}{A_{k+1}}$ and termination criterion $L(x^{k+1}, y^{k+1}) \leq \inf L(\cdot, y^{k+1}) + \varepsilon_k$.
    Then we have for all $K \geq 1$ and any $x \in \bR^n$ and any $y \in \bR^m$:
\begin{align}
L(\breve x^{K}, y)-L(x, y^{K}) \leq \tfrac{c + \lambda \star \phi(y^0 -y) (p+1)^{p+1}}{K^{p}}.
\end{align}
\end{theorem}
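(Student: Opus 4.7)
The plan is to transplant the substitution--and--sum strategy of \cref{thm:inexact_ppa_rate:averaging} into the primal-dual setting. First I would establish a per-iteration inequality that mirrors \cref{thm:decrease_inexact_ppa} but tracks $L$ rather than $\psi$. The termination criterion together with \cref{thm:errors_alm:al} yields that $x^{k+1}$ is an $\varepsilon_k$-minimizer of $L(\cdot,y^{k+1})$ and, by the same proposition, that $g^k:=\nabla_y L_\lambda(x^{k+1},w^k)=\nabla\phi(\lambda^{-1}(y^{k+1}-w^k))$ is a concave supergradient of $L(x^{k+1},\cdot)$ at $y^{k+1}$. Combining $L(x^{k+1},y^{k+1})\le L(x,y^{k+1})+\varepsilon_k$ with the concavity bound $L(x^{k+1},y)\le L(x^{k+1},y^{k+1})+\langle g^k,y-y^{k+1}\rangle$ and then applying \cref{thm:uniform_convexity} to $\phi$ at the points $\lambda^{-1}(y-w^k)$ and $\lambda^{-1}(y^{k+1}-w^k)$ — exactly as in the derivation of \cref{eq:uniform_convexity_upd} — produces, for every $x\in\bR^n$ and $y\in\bR^m$,
\begin{align*}
L(x^{k+1},y)-L(x,y^{k+1})\le \lambda\star\phi(y-w^k)-\lambda\star\phi(y^{k+1}-w^k)-\tfrac{1}{2^{p-1}}\lambda\star\phi(y-y^{k+1})+\varepsilon_k.
\end{align*}

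Next I would mimic the estimate-sequence trick of the proof of \cref{thm:inexact_ppa_rate:averaging}. Substituting $\tilde y_k:=\tfrac{a_{k+1}}{A_{k+1}}y+\tfrac{A_k}{A_{k+1}}y^k$ into the $y$-slot of the inequality, the crucial algebraic cancellation
\[
\tilde y_k-w^k=\tfrac{a_{k+1}}{A_{k+1}}(y-y^0)
\]
holds and, by the $(p+1)$-homogeneity of $\lambda\star\phi$, converts the leading term into $(\tfrac{a_{k+1}}{A_{k+1}})^{p+1}\lambda\star\phi(y-y^0)$. Lower bounding the left-hand side by concavity of $L(x^{k+1},\cdot)$, dropping the non-positive $-\lambda\star\phi(\cdot)$ contributions, and multiplying by $A_{k+1}$ yields the per-iteration recursion
\begin{align*}
a_{k+1}L(x^{k+1},y)+A_kL(x^{k+1},y^k)-A_{k+1}L(x,y^{k+1})\le \tfrac{a_{k+1}^{p+1}}{A_{k+1}^p}\lambda\star\phi(y-y^0)+A_{k+1}\varepsilon_k.
\end{align*}

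The concluding step is summation from $k=0$ to $K-1$, using convexity of $L(\cdot,y)$ together with $A_K\breve x^K=\sum_{k=1}^K a_kx^k$ to recover $A_K L(\breve x^K,y)$ on the left, followed by the same telescoping calculus as in the dual case: $\sum_{k=1}^K\tfrac{a_k^{p+1}}{A_k^p}\le(p+1)^{p+1}K$ from \cite{doikov2020inexact}, $A_{k+1}\varepsilon_k=c$ from the choice of errors, and finally division by $A_K=K^{p+1}$. The step I expect to be the main obstacle is the apparent mismatch introduced by the mixed term $A_kL(x^{k+1},y^k)$: it does not cancel directly with $-A_{k+1}L(x,y^{k+1})$ at the previous iteration, and, unlike in the Euclidean setting, one cannot rely on nonexpansiveness to absorb the residual. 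The way to close this gap is to re-apply the concavity-plus-uniform-convexity argument at $y=y^k$, which rewrites $L(x^{k+1},y^k)$ in terms of $L(x,y^{k+1})+\varepsilon_k$ plus a $\lambda\star\phi$-correction of order $(\tfrac{a_{k+1}}{A_{k+1}})^{p+1}\lambda\star\phi(y^k-y^0)$; these corrections combine with the main right-hand side across iterations to yield a clean telescoping, in the spirit of the primal-dual analysis of \cite{yang2020catalyst}, which we have to extend here to the non-Euclidean higher-order regime where the constant $1/2^{p-1}$ of \cref{thm:uniform_convexity} prevents the straightforward telescoping available for $p=1$.
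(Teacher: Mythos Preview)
Your per-iteration inequality and the substitution $\tilde y_k=\tfrac{a_{k+1}}{A_{k+1}}y+\tfrac{A_k}{A_{k+1}}y^k$ are fine and match the paper up to that point. The gap is exactly where you suspect, and your proposed fix does not close it. After your recursion
\[
a_{k+1}L(x^{k+1},y)+A_kL(x^{k+1},y^k)-A_{k+1}L(x,y^{k+1})\le \tfrac{a_{k+1}^{p+1}}{A_{k+1}^p}\lambda\star\phi(y-y^0)+A_{k+1}\varepsilon_k,
\]
summing requires an \emph{upper} bound on $-A_kL(x^{k+1},y^k)$, i.e.\ a \emph{lower} bound on $L(x^{k+1},y^k)$. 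Your fix, ``re-apply the concavity-plus-uniform-convexity argument at $y=y^k$,'' produces the inequality $L(x^{k+1},y^k)\le L(x,y^{k+1})+\varepsilon_k+(\tfrac{a_{k+1}}{A_{k+1}})^{p+1}\lambda\star\phi(y^k-y^0)$, which is the wrong direction. Moreover, even if the sign were right, the correction $\lambda\star\phi(y^k-y^0)$ is not controlled in the averaging scheme (there is no level-set argument available here, unlike for $\theta_k=1$), so nothing telescopes. The root cause is that you invoked the termination criterion \emph{too early}: replacing $L(x^{k+1},y^{k+1})$ by $L(x,y^{k+1})$ in the very first step destroys the algebraic structure that the summation relies on.

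The paper proceeds differently. It keeps $L(x^{k+1},y^{k+1})$ on the right and writes the recursion as
\[
A_{k+1}\bigl(L(x^{k+1},y)-L(x^{k+1},y^{k+1})\bigr)\le A_k\bigl(L(x^{k+1},y)-L(x^{k+1},y^k)\bigr)+\tfrac{a_{k+1}^{p+1}}{A_{k+1}^p}\lambda\star\phi(y^0-y).
\]
Only now does it use the termination criterion, but from the \emph{previous} step: $L(x^k,y^k)\le L(x^{k+1},y^k)+\varepsilon_{k-1}$ gives the correct-sign lower bound $L(x^{k+1},y^k)\ge L(x^k,y^k)-\varepsilon_{k-1}$, which swaps $x^{k+1}$ for $x^k$. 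Adding and subtracting $L(x^k,y)$ then yields a genuine telescoping term $A_k\bigl(L(x^k,y)-L(x^k,y^k)\bigr)$ plus a residual $A_k\bigl(L(x^{k+1},y)-L(x^k,y)\bigr)$; after summing, an Abel summation converts $A_KL(x^K,y)-\sum_k A_k\bigl(L(x^{k+1},y)-L(x^k,y)\bigr)$ into $\sum_{k=1}^K a_kL(x^k,y)$, which is precisely what combines with convexity to produce $A_KL(\breve x^K,y)$. The termination criterion is applied once more only at the very end, to pass from $L(x^K,y^K)$ to $L(x,y^K)$. None of this uses nonexpansiveness or a bound on $\|y^k-y^0\|$, so the obstacle you anticipated is avoided rather than confronted.
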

\ifx\ifsvjour\true
The proof is included in \cref{sec-append:prf-primal-dual-gap}.
\fi
\ifx\ifsvjour\false
\begin{proof}
Choose any $w \in \bR^m$. We have by the $y$-update \cref{eq:exact-power-alm-dual} that $v^k = \lambda^{-p}\nabla \phi(w^k - y^{k+1}) = \nabla \phi(\lambda^{-1}(w^k - y^{k+1})) \in \partial_y (-L)(x^{k+1}, y^{k+1})$ and hence via concavity of $L(x^{k+1}, \cdot)$:
\begin{align}
L(x^{k+1}, w) \leq L(x^{k+1}, y^{k+1}) + \lambda^{-p}\langle \nabla \phi(w^k - y^{k+1}), (w^k - w) -(w^k -y^{k+1}) \rangle.
\end{align}
By uniform convexity of $\phi$ invoking \cref{thm:uniform_convexity} we can further bound:
\begin{align*}
L(x^{k+1}, w) &\leq L(x^{k+1}, y^{k+1}) + \lambda^{-p}\phi(w^k - w) - \lambda^{-p}\phi(w^k -y^{k+1}) - \lambda^{-p}\tfrac{1}{2^p}\phi(y^{k+1} - w) \\
&\leq L(x^{k+1}, y^{k+1}) + \lambda^{-p}\phi(w^k - w).
\end{align*}
Let $y \in \bR^m$. Plugging $w := (1 - \theta_k)y + \theta_k y^k$ in the identity above we obtain by concavity of $L(x^{k+1},\cdot)$, using $w^k - w = \theta_k y^k + (1 - \theta_k) y^0- (1 - \theta_k)y-\theta_k y^k=(1-\theta_k)(y^0 - y)$ and the homgeneity of $\phi$ with order $p+1$:
\begin{align*}
(1 - \theta_k) L(x^{k+1},y) + \theta_k L(x^{k+1}, y^k)&\leq L(x^{k+1}, y^{k+1}) + \lambda^{-p}(1-\theta_k)^{p+1}\phi(y^0 -y).
\end{align*}
Since $\theta_k = \tfrac{A_k}{A_{k+1}}$, and $1-\theta_k = \tfrac{a_{k+1}}{A_{k+1}}$ we obtain:
\begin{align*}
\tfrac{a_{k+1}}{A_{k+1}}L(x^{k+1},y) + \tfrac{A_k}{A_{k+1}} L(x^{k+1}, y^k)&\leq L(x^{k+1}, y^{k+1}) + \lambda^{-p}(\tfrac{a_{k+1}}{A_{k+1}})^{p+1}\phi(y^0 -y).
\end{align*}
Multiplication with $A_{k+1}$ yields:
\begin{align*}
a_{k+1} L(x^{k+1},y) + A_k L(x^{k+1}, y^k) &\leq A_{k+1} L(x^{k+1}, y^{k+1}) + \lambda^{-p}\tfrac{a_{k+1}^{p+1}}{A_{k+1}^p}\phi(y^0 -y).
\end{align*}
Using the relation $a_{k+1}=A_{K+1}-A_k$ the inequality can be rewritten:
\begin{align} \label{eq:ergodic_bound}
A_{k+1} \big(L(x^{k+1},y)-L(x^{k+1}, y^{k+1})\big) &\leq A_{k} \big(L(x^{k+1},y) - L(x^{k+1}, y^k)\big) + \lambda^{-p}\tfrac{a_{k+1}^{p+1}}{A_{k+1}^p}\phi(y^0 -y).
\end{align}
Thanks to the error bound for the $x$-update in \cref{alg:inexact_al_averaging} and using the convention $\varepsilon_{-1}:= L(x^0, y^0)-\inf L( \cdot, y^0)$ we have that for all $k \geq 0$, $L(x^k, y^k) \leq \inf L(\cdot, y^k) + \varepsilon_{k-1} \leq L(x^{k+1}, y^k) + \varepsilon_{k-1}.$
This yields
\begin{align}
L(x^{k+1},y)- L(x^k, y) + L(x^k, y)-L(x^k, y^k) + \varepsilon_{k-1}\geq L(x^{k+1},y)-L(x^{k+1}, y^k).
\end{align}
Plugging this inequality in \cref{eq:ergodic_bound} we obtain:
\begin{align*}
A_{k+1} \big(L(x^{k+1},y)-L(x^{k+1}, y^{k+1})\big) &\leq A_{k} \big(L(x^k, y)-L(x^k, y^k)\big) + A_{k} \big(L(x^{k+1},y)- L(x^k, y)\big) \\
&\qquad+ A_k \varepsilon_{k-1} 
+ \tfrac{a_{k+1}^{p+1}}{A_{k+1}^p}(\lambda \star\phi)(y^0 -y).
\end{align*}
Summing the inequality from $k=0$ to $k=K-1$ we obtain since $A_0 =0$ using the fact that $\sum_{k=1}^{K} \tfrac{a_{k}^{p+1}}{A_{k}^p} \leq (p+1)^{p+1}K$:
\begin{align*}
A_{K} L(x^{K},y)- A_kL(x^{K}, y^{K}) &\leq \sum_{k=0}^{K-1} A_{k} \big(L(x^{k+1},y)- L(x^k, y)\big) \\
&\qquad + \sum_{k=0}^{K-2} A_{k+1} \varepsilon_{k} + \lambda \star \phi(y^0 -y) (p+1)^{p+1}K.
\end{align*}
By reordering the sum using $a_{k+1} = A_{k+1}-A_k$ and $A_0=0$ we have the identity:
\begin{align*}
    &A_K L(x^K,y) -\sum_{k=0}^{K-1} A_{k} \big(L(x^{k+1},y)- L(x^k, y)\big) =\sum_{k=1}^{K}a_{k} L(x^{k}, y).
\end{align*}
This allows us to rewrite the previous inequality
\begin{align} \label{eq:ergodic_estimate_main}
\sum_{k=1}^{K}a_{k} L(x^{k}, y)-A_K L(x^{K}, y^{K}) &\leq \sum_{k=0}^{K-2} A_{k+1} \varepsilon_{k} + \lambda \star \phi(y^0 -y) (p+1)^{p+1}K.
\end{align}

By convexity of $L(\cdot, y)$ since $A_{K} = \sum_{j=1}^{K} a_j$ we have:
\begin{align}
A_K L(\breve x^K, y) \leq A_K \sum_{k=1}^{K} \tfrac{a_k}{\sum_{j=1}^K a_j} L(x^k, y) = \sum_{k=1}^{K} a_k L(x^k, y).
\end{align}
Plugging this relation in \cref{eq:ergodic_estimate_main} we can lower bound:
\begin{align*}
A_{K} \big(L(\breve x^{K}, y)-L(x^{K}, y^{K})\big) &\leq \sum_{k=0}^{K-2} A_{k+1} \varepsilon_{k} + \lambda \star \phi(y^0 -y) (p+1)^{p+1}K.
\end{align*}
By the $x$-update we have for any $x \in \bR^n$, $L(x^{K}, y^{K}) \leq \inf L(\cdot, y^K) + \varepsilon_{K-1} \leq L(x, y^K) + \varepsilon_{K-1}$
and hence we can lower bound since $A_{k+1}\varepsilon_k = c$
\begin{align*}
A_{K} \big(L(\breve x^{K}, y)-L(x, y^{K})\big) &\leq c K + \lambda \star \phi(y^0 -y) (p+1)^{p+1}K.
\end{align*}
Dividing by $A_k$ we obtain $L(\breve x^{K}, y)-L(x, y^{K}) \leq \tfrac{c + \lambda \star \phi(y^0 -y) (p+1)^{p+1}}{K^{p}}$ as claimed.
\ifx\ifsvjour\true
\qed
\fi
\end{proof}
\fi

Note that this result is not specific to the Lagrangian function as it holds for any convex-concave function, where the inexact higher-order proximal point method is applied to the concave part.

\subsection{Convergence for problems with linear equality constraints}
In this subsection we revisit \cref{example:linear_eq_constraints} specializing to the problem of minimizing a convex function $f$ subject to linear equality constraints. First, we provide results for the ergodic iterates, harnessing the analysis of the primal-dual gap that is presented in \cref{thm:primal-dual-gap}. 
\begin{theorem} \label{thm:alm-linear-ergodic}
    In the situation of \cref{example:linear_eq_constraints} let $\{\breve x^K\}_{K \in \bN}$ be the ergodic primal iterates of \cref{alg:inexact_al_averaging} as defined in \eqref{eq:ergodic_iterates} with $\theta_k = \tfrac{A_k}{A_{k+1}}$, $\varepsilon_k = \tfrac{c}{(k+1)^{p+1}}$ for some $c \geq 0$ and termination criterion $L(x^{k+1}, y^{k+1}) \leq \inf L(\cdot, y^{k+1}) + \varepsilon_k$ for all $k \in \bN_0$. Then, the following inequality holds for $K \geq 1$:
    \begin{align}
        \max\{\|A\breve x^K - b\|_*, |f(\breve x^{K}) - f(x^\star)|\} \leq \tfrac{c + \max_{y \in \delta \mathcal{B}}\lambda \star \phi(y^0-y)(p+1)^{p+1}}{K^{p}},
    \end{align}
    where $\delta = 2\|y^\star\| + 1$, $\mathcal{B}:=\{y \in \bR^m: \|y\| \leq 1\}$ denotes the closed unit ball, and $(x^\star, y^\star)$ is a saddle-point of the problem.
\end{theorem}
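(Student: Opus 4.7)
The plan is to specialize the primal--dual gap from Theorem 4.3 to the linear-equality setting of Example 2.1 and combine it with the classical saddle-point inequality at $(x^\star, y^\star)$. Writing out the Lagrangian as $L(x,y) = f(x) + \langle y, Ax-b\rangle$ and using primal feasibility $Ax^\star = b$, the Theorem 4.3 bound at $x = x^\star$ reduces to
\[
f(\breve x^K) + \langle y, A\breve x^K - b\rangle - f(x^\star) \leq \frac{c + \lambda \star \phi(y^0 - y)(p+1)^{p+1}}{K^p}, \qquad \text{for all } y \in \bR^m.
\]
Denote the right-hand side of the theorem to be proved by $R_K$. For every $y \in \delta\mathcal{B}$ the numerator above is dominated by the one defining $R_K$.

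For the feasibility bound I would restrict to $y \in \delta\mathcal{B}$ and take the supremum of the left-hand side; by linearity this supremum equals $\delta\|A\breve x^K-b\|_*$, giving
\[
f(\breve x^K) - f(x^\star) + \delta\|A\breve x^K-b\|_* \leq R_K.
\]
Coupling this with the saddle-point inequality $f(x^\star) = L(x^\star, y^\star) \leq L(\breve x^K, y^\star) = f(\breve x^K) + \langle y^\star, A\breve x^K-b\rangle$, which via H\"older yields $f(x^\star) - f(\breve x^K) \leq \|y^\star\|\|A\breve x^K-b\|_*$, the $f$-terms cancel upon addition and one gets $(\delta - \|y^\star\|)\|A\breve x^K - b\|_* \leq R_K$. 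The prescribed choice $\delta = 2\|y^\star\|+1$ makes $\delta - \|y^\star\| = \|y^\star\|+1 \geq 1$, hence $\|A\breve x^K - b\|_* \leq R_K/(\|y^\star\|+1) \leq R_K$.

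For the suboptimality I would handle each sign separately. The upper bound $f(\breve x^K) - f(x^\star) \leq R_K$ follows by plugging $y = 0$ into the first display, since $0 \in \delta\mathcal{B}$ guarantees $\lambda \star \phi(y^0) \leq \max_{y \in \delta\mathcal{B}}\lambda \star \phi(y^0-y)$. Conversely, reusing the saddle-point bound together with the feasibility estimate just proved gives $f(x^\star) - f(\breve x^K) \leq \|y^\star\|\|A\breve x^K - b\|_* \leq \tfrac{\|y^\star\|}{\|y^\star\|+1}R_K \leq R_K$. Both bounds together yield $|f(\breve x^K) - f(x^\star)| \leq R_K$.

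The only delicate point is the choice of $\delta$: one needs $\delta - \|y^\star\| \geq 1$ so that the cross term $\|y^\star\|\|A\breve x^K-b\|_*$ arising from saddle-point duality can be absorbed cleanly into the feasibility bound without inflating the final constant beyond $R_K$. The shift by $\|y^\star\|+1$ in $\delta = 2\|y^\star\|+1$ is precisely tuned to this purpose.
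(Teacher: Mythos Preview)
Your proof is correct and follows essentially the same approach as the paper: specialize \cref{thm:primal-dual-gap} at $x=x^\star$, take $y=0$ for the upper suboptimality bound, maximize over $y\in\delta\mathcal B$ to obtain $f(\breve x^K)-f(x^\star)+\delta\|A\breve x^K-b\|_*\leq R_K$, combine with the saddle-point inequality to extract the feasibility bound, and feed the latter back to control $f(x^\star)-f(\breve x^K)$. The only cosmetic difference is that you keep the sharper intermediate estimate $\|A\breve x^K-b\|_*\leq R_K/(\|y^\star\|+1)$ before discarding the extra factor, whereas the paper passes through $\|y^\star\|\leq\delta-\|y^\star\|$; both routes yield the same final constant.
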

\ifx\ifsvjour\true
The proof of this theorem is presented in \cref{sec-append:prf-alm-linear-ergodic}.
\fi
\ifx\ifsvjour\false
\begin{proof}
    From \cref{thm:primal-dual-gap} we have that for $K \geq 1$ and $x \in \bR^n, y\in \bR^m$
    \begin{align*}
        L(\breve x^{K}, y)-L(x, y^{K}) \leq \tfrac{c + \lambda \star \phi(y^0 -y) (p+1)^{p+1}}{K^{p}}
    \end{align*}
    and since for $x=x^\star$, $L(\breve x^{K}, y)-L(x^\star, y^{K}) = f(\breve x^{K}) - f(x^\star) + \langle A\breve x^{K} - b, y\rangle$ we obtain the following inequality for all $y \in \bR^m$:
    \begin{align} \label{eq:lagrangian_bound}
        f(\breve x^{K}) - f(x^\star) + \langle A\breve x^{K} - b, y\rangle \leq \tfrac{c + \lambda \star \phi(y^0 -y) (p+1)^{p+1}}{K^{p}}.
    \end{align}
    By taking $y = 0$ in the inequality above we obtain
    \begin{align} \label{eq:f_suboptimal_upper_bound}
         f(\breve x^{K}) - f(x^\star) \leq \tfrac{c + \lambda \star \phi(y^0) (p+1)^{p+1}}{K^{p}}.
    \end{align}
    Now, let $\delta = 2\|y^\star\|+1$ and take the maximum in \eqref{eq:lagrangian_bound} w.r.t. $y \in \delta \mathcal{B}$:
    \begin{align} \label{eq:summable_ineq_1}
        f(\breve x^{K}) - f(x^\star) + \delta \|A\breve x^{K} -b\|_* \leq \tfrac{c + \max_{y \in \delta \mathcal{B}}\lambda \star \phi(y^0-y)(p+1)^{p+1}}{K^{p}}.
    \end{align}
   Since $(x^\star, y^\star)$ is a saddle point, it holds that $L(x^\star, y) \leq L(x^\star, y^\star) \leq L(x, y^\star)$ for any $x\in \bR^n, y\in \bR^m$ and as such $L(x,y^\star) - L(x^\star, y) \geq 0$. Choose $x = \breve x^{K}$ and $y=0$. Then we obtain
   \begin{align} \nonumber
       0 &\leq L(\breve x^{K},y^\star) - L(x^\star, 0) \\
       &= f(\breve x^{K}) - f(x^\star) + \langle A \breve x^{K} - b, y^\star \rangle  \nonumber \\       
       & \leq f(\breve x^{K}) -f(x^\star) + \|y^\star\| \|A\breve x^{K} -b\|_*, 
       \label{eq:summable_ineq_2}
   \end{align}
   where the last inequality follows by applying H\"older's inequality. Summing now \eqref{eq:summable_ineq_1} and \eqref{eq:summable_ineq_2}, we get since $\delta > \|y^\star\|$
   \begin{align}
       \|A\breve x^{K} -b\|_* \leq (\delta - \|y^\star\|)\|A\breve x^{K} -b\|_* \leq \tfrac{c + \max_{y \in \delta \mathcal{B}}\lambda \star \phi(y^0-y)(p+1)^{p+1}}{K^{p}}.
   \end{align}
   From the inequality above together with \eqref{eq:summable_ineq_2} we obtain:
   \begin{align} \nonumber
       f(x^\star) - f(\breve x^{K}) \leq \|y^\star\| \|A\breve x^{K} -b\|_* 
       &\leq (\delta - \|y^\star\|)\|A\breve x^{K} -b\|_*
       \\ 
       &\leq \tfrac{c + \max_{y \in \delta \mathcal{B}}\lambda \star \phi(y^0-y)(p+1)^{p+1}}{K^{p}}, 
   \end{align}
   which along with \eqref{eq:f_suboptimal_upper_bound} implies the claimed result.
\ifx\ifsvjour\true
\qed
\fi
\end{proof}
\fi

Moving on to the analysis of the nonergodic iterates, the following theorem compiles the corresponding results.
\begin{theorem} \label{thm:alm-linear}
    In the situation of \cref{example:linear_eq_constraints} let $\{x^k\}_{k \in \bN_0}$ be the primal iterates of \cref{alg:inexact_al_averaging} with $\theta_k = 1$, $\varepsilon_k = \tfrac{c}{(k+1)^{p+1}}$ and termination criterion $L(x^{k+1}, y^{k+1}) \leq \inf L(\cdot, y^{k+1}) + \varepsilon_k$ for all $k \in \bN_0$. Then, the following hold for $k \geq 1$:
    \begin{thmenum}
        \item 
        \label{thm:linear_eq:feasibility}
        $
            \|Ax^k - b\|_* = \mathcal{O}\bigg(k^{-\tfrac{p^2}{p+1}}\bigg)
        $
        \item 
        $
            |f(x^k)-f(x^*)| = \mathcal{O}\bigg(k^{-\tfrac{p^2}{p+1}}\bigg)
        $
        \label{thm:linear_eq:f_bound}.
    \end{thmenum}
\end{theorem}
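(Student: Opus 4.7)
The plan is to turn the dual suboptimality rate of \cref{thm:inexact_ppa_rate:simple} into rates for the feasibility residual and the primal objective gap, by exploiting the closed form $Ax^{k+1}-b = \nabla\phi(\lambda^{-1}(y^{k+1}-y^k))$ of the dual update from \cref{eq:dual_update_linear_equality} together with the saddle-point relations at $(x^\star,y^\star)$.

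I would first invoke \cref{thm:inexact_ppa_rate:simple} applied to $\psi := -\varrho$, which under level-boundedness of the dual function (so that $D_\beta < \infty$) yields $\psi(y^k)-\psi^\star = \mathcal{O}(k^{-p})$ and keeps $\{y^k\}$ inside a bounded set. Next I would specialize the per-step decrease \cref{eq:uniform_convexity_upd} of \cref{thm:decrease_inexact_ppa} with $w = y = y^k$ and use $\phi(0) = 0$ to collapse the right-hand side to
\begin{equation*}
\tfrac{\lambda^{-p}}{2^{p-1}}\phi(y^{k+1}-y^k) \leq \psi(y^k) - \psi(y^{k+1}) + \varepsilon_k \leq (\psi(y^k)-\psi^\star) + \varepsilon_k = \mathcal{O}(k^{-p}).
\end{equation*}
Since $\phi = \tfrac{1}{p+1}\|\cdot\|^{p+1}$, this gives the pointwise displacement bound $\|y^{k+1}-y^k\| = \mathcal{O}(k^{-p/(p+1)})$.

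For \cref{thm:linear_eq:feasibility}, the closed form of the dual update, combined with the homogeneity identity $\|\nabla\phi(z)\|_* = \|z\|^p$, immediately yields
\begin{equation*}
\|Ax^{k+1}-b\|_* = \lambda^{-p}\|y^{k+1}-y^k\|^p = \mathcal{O}(k^{-p^2/(p+1)}),
\end{equation*}
as required. For \cref{thm:linear_eq:f_bound} I would establish matched upper and lower estimates on $f(x^{k+1})-f(x^\star)$. The upper estimate uses the inner stopping criterion, which gives $L(x^{k+1},y^{k+1}) \leq \varrho(y^{k+1}) + \varepsilon_k \leq f(x^\star) + \varepsilon_k$ by weak duality; combined with the explicit form $L(x,y) = f(x) + \langle y, Ax-b\rangle$ and H\"older's inequality, this yields $f(x^{k+1})-f(x^\star) \leq \varepsilon_k + \|y^{k+1}\|\,\|Ax^{k+1}-b\|_*$. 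The lower estimate uses the saddle-point inequality $f(x^\star) \leq L(x^{k+1},y^\star) = f(x^{k+1}) + \langle y^\star, Ax^{k+1}-b\rangle$, which gives $f(x^\star)-f(x^{k+1}) \leq \|y^\star\|\,\|Ax^{k+1}-b\|_*$. Boundedness of $\{y^k\}$ from the first step and the rate from \cref{thm:linear_eq:feasibility} then close the argument.

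The main obstacle I foresee is pinning down the \emph{sharp} exponent $p^2/(p+1)$ rather than a weaker one: a purely summability-based argument that only bounds $\sum_k \phi(y^{k+1}-y^k)$ by a constant yields an averaged rate $\phi(y^{k+1}-y^k) = \mathcal{O}(k^{-1})$, hence only $\|Ax^{k+1}-b\|_* = \mathcal{O}(k^{-p/(p+1)})$, which agrees with the claim only when $p=1$. Recovering the advertised rate genuinely requires chaining the \emph{non-averaged} dual suboptimality bound from \cref{thm:inexact_ppa_rate:simple} with the per-step decrease from \cref{thm:decrease_inexact_ppa}, which in turn relies on the level-boundedness of $\psi$ underlying the finiteness of $D_\beta$.
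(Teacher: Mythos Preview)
Your proposal is correct and follows essentially the same route as the paper: both combine the per-step descent of \cref{thm:decrease_inexact_ppa} at $w=y=y^k$ with the non-averaged dual rate of \cref{thm:inexact_ppa_rate:simple} to bound $\phi(y^{k+1}-y^k)$, convert this into a feasibility bound via the explicit dual update, and then sandwich $f(x^{k+1})-f(x^\star)$ using the stopping criterion together with a saddle-point inequality and the boundedness $\|y^k-y^\star\|\leq D_\beta$. The only differences are cosmetic: the paper retains the extra $-\lambda^{-p}\phi(y^k-y^{k+1})$ term from \cref{eq:uniform_convexity_ineq_full} (giving the slightly sharper constant $1+2^{1-p}$ rather than your $2^{1-p}$), and for the lower bound it invokes $f(x^\star)\leq L_\lambda(x^{k+1},y^\star)$ via the augmented Lagrangian, whereas your use of the ordinary Lagrangian $f(x^\star)\leq L(x^{k+1},y^\star)$ is cleaner and equally valid.
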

The proof is presented in \cref{sec-append:prf-alm-linear}.

Notice that by choosing $p=1$, we obtain the rates described in \cite[Corollary 4.4]{liu2019nonergodic}. In this sense, it can be considered as a generalization of that result in the power ALM setting. Moreover, such rates can be shown, albeit with different constants, regardless of the specific algorithm. 

\subsection{Solution of the subproblems of the power augmented Lagrangian method}
Again we specialize to the minimization of a convex cost subject to linear equality constraints as in \cref{example:linear_eq_constraints}.
Although the convergence results of the power ALM in terms of outer iterations seem quite attractive when using $p > 1$, the inner subproblems become harder to solve. 
Notice that when choosing $\phi = \tfrac{1}{p+1}\|\cdot\|_2^{p+1}$ the term $\|Ax-b\|_2^{q+1}$ in \cref{eq:power_alm_lin_eq} is H\"older smooth of order $q$. In general, such a term slows down the convergence of first-order methods as shown in \cite{grimmer2023optimal}. More specifically, if $f$ is an $L$-smooth function, the augmented Lagrangian \cref{eq:power_alm_lin_eq} as a function in $x$ is comprised of functions with varying order of H\"older smoothness. This stands in contrast to the classical ALM, where the augmented Lagrangian is Lipschitz smooth and hence classical first-order methods such as the accelerated proximal gradient method can be applied. Nevertheless, again from \cite{grimmer2023optimal} we know that the inexact version of this algorithm, specifically the one presented in \cite{nesterov2015universal}, can be used to minimize \cref{eq:power_alm_lin_eq} as well. Moreover in a recent line of work, adaptive proximal gradient methods were introduced that can also tackle composite convex locally H\"older smooth problems \cite{oikonomidis2024adaptive, li2024simpleuniformlyoptimalmethod}.

\subsection{Interpretation as an implicit adaptive penalty scheme} \label{sec:implicit-adaptive-penalty-interpretation}

Since power ALM, \cref{alg:inexact_al_averaging} with $\theta_k = 1$, can be interpreted as a proximal point method applied to the negative dual function \(\psi\), i.e.,
\begin{align} \tag{\ref{eq:ppa} rev.}
y^{k+1} &= \aprox[\lambda]{\phi}{\psi}(y^k) = \argmin_{y \in \bR^m}~\psi(y) + \lambda \star \phi(y^k - y),
\end{align}

the optimality conditions of each multiplier update yield, for $\phi = \frac{1}{p+1} \Vert \cdot \Vert_2^{p+1}$,
\begin{align} 
    0 \in \partial \psi(y^{k+1}) + \lambda^{-p} \Vert y^{k+1} - y^k \Vert^{p-1} (y^{k+1} - y^{k})\textcolor{red}{.}
\end{align}
For classical ALM with $p = 1$, this reduces to a classical proximal point method with optimality conditions $0 \in \partial \psi(y^{k+1}) + \lambda^{-1} (y^{k+1} - y^{k})$.
Therefore, power ALM with $2$-norms can be interpreted as classical ALM with an adaptive penalty scheme that is implicitly defined by 
\begin{equation} \label{eq:adaptive_penalty_2norm}
    \lambda_k := \lambda^{p} \Vert y^{k+1} - y^k \Vert^{1-p}.
\end{equation}
Notably, if \(\Vert y^{k+1} - y^k \Vert \to 0\) and \(p > 1\), then \(\lambda_k \to \infty\).
This is qualitatively similar to adaptive penalty schemes that are often used in the context of ALMs, see e.g.\,\cite[\S 17.4]{nocedal_numerical_2006} and \cite[\S 5.3]{hermans_qpalm_2021}.

In the case of $p$-norms, a similar interpretation holds, but the scalar penalty $\lambda_k$ becomes an adaptive diagonal penalty matrix $\Lambda_k$ implicitly defined by
\begin{equation} \label{eq:adaptive_penalty_pnorm}
    \left[ \Lambda_k \right]_{j, j} = \lambda^{p} \lvert y_j^{k+1} - y_j^k \rvert^{1-p}, \quad \forall j \in \{1, 2, \ldots, n\}.
\end{equation}

\section{Numerical evaluation}
\label{section:numerics}
This section compares the performance of power ALM, \cref{alg:inexact_al_averaging} with $\theta_k = 1$, on various composite optimization problems for powers $q+1$, with $q \in\{ 0.7, 0.8, 0.9, 1\}$.
Note that the choice $q = 1$ is equivalent to the classical ALM.
We also compare against classical ALM using an adaptive penalty parameter $\lambda_k$, since power ALM implicitly defines an adaptive penalty scheme (cf.\,\cref{sec:implicit-adaptive-penalty-interpretation}), and since such variants are known to yield good performance in practice, see e.g.\,\cite[\S $17.4$]{nocedal_numerical_2006} and \cite[\S $5.3$]{hermans_qpalm_2021}.
In particular, we consider an adaptive rule that sets $\lambda_{k+1} = 2 \lambda_k$ whenever the absolute constraint violation $r_{k+1}$ has not sufficiently decreased, i.e., ${r_{k+1}} \geq \delta {r_k}$ for some $\delta \in (0, 1)$, and sets $\lambda_{k+1} = \lambda_k$ otherwise.
This can be seen as a simplified version of the scheme proposed in \cite[\S $5.3$]{hermans_qpalm_2021}.
Unless mentioned otherwise, the choice $\delta = 0.1$ is used.
The solvers are terminated if $\vert f(x^{k+1}) - f^\star \vert \leq 10^{-6}$ and $r_{k+1} \leq 10^{-6}$.
As a termination criterion for the inner problems we use $\Vert \nabla_{x} L_\lambda (x^{k+1}, y^k) \Vert \leq \tfrac{10^{-3}}{k^{p+1}}$, i.e., the criterion supported by the theory.
The code to reproduce these experiments is available online\footnote{\url{https://github.com/alexanderbodard/Power-PPM-and-ALM}}.

\subsection{Linear programming}

\begin{table}
    \caption{
        Median of the total number of BFGS iterations required to solve the LP, computed over $N=20$ random realizations.
    }
    \label{table:lps}
    \centering
    \begin{adjustbox}{width=\textwidth}
    \setlength\extrarowheight{5pt}
    
    \pgfplotstabletypeset[%
        begin table={\begin{tabular}[t]},
        every head row/.style={
            before row={%
              \hline
              \vphantom{$q = 1$}\\
              \hline
              \vphantom{$q = 1$}\\
              \hline
            },
            after row/.add={}{\hline},
        },
        header=true,
        col sep=&,
        row sep=\\,
        string type,
        columns/{$(m, n)$}/.style ={column name={$(m, n)$}, column type={|c}},
        every row no 7/.style={after row=\hline},
    ]{
        \\
        $(m, n)$\\
        (200, 100)\\
        (400, 200)\\
        (600, 300)\\
        (300, 100)\\
        (600, 200)\\
        (400, 100)\\
        (500, 100)\\
        (600, 100)\\
    }%
    \pgfplotstabletypeset[%
        begin table={\begin{tabular}[t]},
        every head row/.style={
        before row={%
          \hline
          \multicolumn{4}{|c|}{Classical ALM} & \multicolumn{4}{|c|}{Power ALM}\\
          \hline
          \multicolumn{2}{|c|}{$q = 1$, fixed $\lambda$} & \multicolumn{2}{c|}{$q = 1$, adaptive $\lambda$} & \multicolumn{2}{c|}{$q \neq 1, \lambda = 10^2$} & \multicolumn{2}{c|}{$q \neq 1, \lambda = 10^3$}\\
          \hline
        },
        after row/.add={}{\hline},
        },
        header=true,
       precision=1,
       columns/0/.style ={column name={$\lambda = 10^3$}, column type={|l}},
       columns/1/.style ={column name={$\lambda = 10^4$}, column type={r|}},
       columns/2/.style ={column name={$\lambda_0 = 10^2$}, column type={l}},
       columns/3/.style ={column name={$\lambda_0 = 10^3$}, column type={r|}},
       columns/4/.style ={column name={$q = 0.9$}, column type={l}},
       columns/5/.style ={column name={$q = 0.8$}, column type={r|}},
       columns/6/.style ={column name={$q = 0.9$}, column type={l}},
       columns/7/.style ={column name={$q = 0.8$}, column type={r|}},
        every row no 7/.style={after row=\hline},
       every row 0 column 4/.style={
            postproc cell content/.style={
              @cell content/.add={$\bf}{$}
            }
        },
       every row 1 column 4/.style={
            postproc cell content/.style={
              @cell content/.add={$\bf}{$}
            }
        },
       every row 2 column 6/.style={
            postproc cell content/.style={
              @cell content/.add={$\bf}{$}
            }
        },
       every row 3 column 5/.style={
            postproc cell content/.style={
              @cell content/.add={$\bf}{$}
            }
        },
       every row 4 column 5/.style={
            postproc cell content/.style={
              @cell content/.add={$\bf}{$}
            }
        },
       every row 5 column 5/.style={
            postproc cell content/.style={
              @cell content/.add={$\bf}{$}
            }
        },
       every row 6 column 5/.style={
            postproc cell content/.style={
              @cell content/.add={$\bf}{$}
            }
        },
       every row 7 column 5/.style={
            postproc cell content/.style={
              @cell content/.add={$\bf}{$}
            }
        },
    ]{data/lp.csv}
    \end{adjustbox}
    
\end{table}

Consider a linear program with inequality constraints, which is an instance of \cref{eq:problem} via the choices $f(x):=\langle c, x\rangle$, $g:=\delta_{-\bR^m_+}$ and $\mathcal{A}(x)=Ax-b$
where $A \in \bR^{m \times n}$ and $b \in \bR^{m}$.
The problems are randomly generated such that $\text{cond}(A) = 1000$ using the method described in \cite{mohammadisiahroudi_generating_2023}. We used the implementation of the same authors which is available online\footnote{\url{https://github.com/QCOL-LU/QIPM}.}.
The inner problems are solved using BFGS.
\cref{table:lps} compares power ALM with $(q+1)$-norms in terms of the total number of BFGS iterations.
Classical ALM with adaptive penalty uses the proposed adaptive scheme with parameter $\delta = 10^{-3}$.
The table reports the median of the total number of BFGS iterations over $N = 20$ random realizations of \cref{eq:problem}, and this for various dimensions $(m, n)$ of the problem.
Additionally, \cref{table:lps-p95} in \cref{sec-append:additional-tables} provides the corresponding $95^\text{th}$ percentiles.

\subsection{Quadratic programming}

\begin{table}
    \caption{
        Median of the total number of iterations of \cite{li2024simpleuniformlyoptimalmethod} required to solve the QP, over $N=20$ random realizations.
    }
    \label{table:quadratics}
    \centering
    \begin{adjustbox}{width=\textwidth}
    \setlength\extrarowheight{5pt}
    
    \pgfplotstabletypeset[%
        begin table={\begin{tabular}[t]},
        every head row/.style={
            before row={%
              \hline
              \vphantom{$q = 1$}\\
              \hline
              \vphantom{$q = 1$}\\
              \hline
            },
            after row/.add={}{\hline},
        },
        header=true,
        col sep=&,
        row sep=\\,
        string type,
        columns/{$(m, n)$}/.style ={column name={$(m, n)$}, column type={|c}},
        every row no 9/.style={after row=\hline},
    ]{
        \\
        $(m, n)$\\
        (200, 400)\\
        (250, 500)\\
        (300, 600)\\
        (350, 700)\\
        (400, 800)\\
        (450, 900)\\
        (150, 450)\\
        (200, 600)\\
        (250, 750)\\
        (300, 900)\\
    }%
    \pgfplotstabletypeset[%
        begin table={\begin{tabular}[t]},
        every head row/.style={
        before row={%
          \hline
          \multicolumn{6}{|c|}{Classical ALM} & \multicolumn{3}{|c|}{Power ALM}\\
          \hline
          \multicolumn{3}{|c|}{$q = 1$, fixed $\lambda$} & \multicolumn{3}{c|}{$q = 1$, adaptive $\lambda$} & \multicolumn{3}{c|}{$q \neq 1, \lambda = 0.1$}\\
          \hline
        },
        after row/.add={}{\hline},
        },
        header=true,
       precision=1,
       columns/0/.style ={column name={$\lambda = 0.1$}, column type={|l}},
       columns/1/.style ={column name={$\lambda = 1$}, column type={c}},
       columns/2/.style ={column name={$\lambda = 10$}, column type={r|}},
       columns/3/.style ={column name={$\lambda_0 = 0.01$}, column type={l}},
       columns/4/.style ={column name={$\lambda_0 = 0.1$}, column type={c}},
       columns/5/.style ={column name={$\lambda_0 = 1$}, column type={r|}},
       columns/6/.style ={column name={$q = 0.9$}, column type={l}},
       columns/7/.style ={column name={$q = 0.8$}, column type={c}},
       columns/8/.style ={column name={$q = 0.7$}, column type={r|}},
        every row no 9/.style={after row=\hline},
       every row 0 column 5/.style={
            postproc cell content/.style={
              @cell content/.add={$\bf}{$}
            }
        },
       every row 1 column 7/.style={
            postproc cell content/.style={
              @cell content/.add={$\bf}{$}
            }
        },
       every row 2 column 7/.style={
            postproc cell content/.style={
              @cell content/.add={$\bf}{$}
            }
        },
       every row 3 column 7/.style={
            postproc cell content/.style={
              @cell content/.add={$\bf}{$}
            }
        },
       every row 4 column 6/.style={
            postproc cell content/.style={
              @cell content/.add={$\bf}{$}
            }
        },
       every row 5 column 7/.style={
            postproc cell content/.style={
              @cell content/.add={$\bf}{$}
            }
        },
       every row 6 column 6/.style={
            postproc cell content/.style={
              @cell content/.add={$\bf}{$}
            }
        },
       every row 7 column 8/.style={
            postproc cell content/.style={
              @cell content/.add={$\bf}{$}
            }
        },
       every row 8 column 8/.style={
            postproc cell content/.style={
              @cell content/.add={$\bf}{$}
            }
        },
       every row 9 column 8/.style={
            postproc cell content/.style={
              @cell content/.add={$\bf}{$}
            }
        },
       every row 10 column 7/.style={
            postproc cell content/.style={
              @cell content/.add={$\bf}{$}
            }
        },
       every row 11 column 7/.style={
            postproc cell content/.style={
              @cell content/.add={$\bf}{$}
            }
        },
       every row 12 column 7/.style={
            postproc cell content/.style={
              @cell content/.add={$\bf}{$}
            }
        },
       every row 13 column 7/.style={
            postproc cell content/.style={
              @cell content/.add={$\bf}{$}
            }
        },
       every row 14 column 7/.style={
            postproc cell content/.style={
              @cell content/.add={$\bf}{$}
            }
        },
       every row 15 column 7/.style={
            postproc cell content/.style={
              @cell content/.add={$\bf}{$}
            }
        },
       every row 16 column 7/.style={
            postproc cell content/.style={
              @cell content/.add={$\bf}{$}
            }
        },
    ]{data/convex_quadratic_v2.csv}
    \end{adjustbox}
    
\end{table}

We now consider random positive semi-definite quadratic programs. This is an instance of \cref{eq:problem} via the choices $f(x)=\delta_{[\ell, u]^n}(x) + \tfrac{1}{2} \langle x, Qx \rangle + \langle c, x\rangle$, $g=\delta_{\{0\}}$ and $\mathcal{A}(x)= Ax-b$.
The problems are generated as follows.
The vector $c \in \bR^{n}$ and a diagonal matrix $D \in \bR^{n \times n}$ are sampled from a normal distribution with unit variance, centered around zero and five respectively. 
Any negative elements of $D$ are clipped to $0$ to ensure positive semi-definiteness.
Then, a random integer $k$ is taken uniformly random from the interval $[\lceil \tfrac{n}{4} \rceil, \lfloor \tfrac{n}{2} \rfloor]$.
Next, $k$ diagonal elements of the matrix $D$, which are chosen uniformly at random without replacement, are set to zero.
We generate an orthonormal matrix $V$ as the orthonormal matrix in the QR-decomposition of a random matrix, which is taken from a standard normal distribution.
We define the positive semi-definite matrix $Q = V D V^\top$.
The constraint matrix $A \in \bR^{m \times n}$ is sampled from the standard normal distribution, and the vector $b \in \bR^{m}$ is sampled uniformly at random over the interval $[-1, 1]$.
The upper and lower bounds are fixed as $u = -\ell = 0.8 \cdot 1_n$.

The equality constraint $A x = b$ is enforced through the augmented Lagrangian, whereas the box-constraints are enforced in the inner problems using the first-order algorithm from \cite{li2024simpleuniformlyoptimalmethod}.
\cref{table:quadratics} compares power ALM with $2$-norms raised to a power $q+1$ in terms of the total number of iterations of \cite{li2024simpleuniformlyoptimalmethod}.
A reference solution $f^\star$ is computed by QPALM \cite{hermans_qpalm_2021} up to an absolute accuracy of (at least) $10^{-10}$.
The table reports both the median of the total number of iterations \cite{li2024simpleuniformlyoptimalmethod} over $N = 20$ random realizations of the QP, and this for various problem sizes $(m, n)$.
Observe that power ALM with a well-chosen power \(q < 1\) consistently outperforms classical ALM with fixed $\lambda$, and performs very similar to classical ALM with adaptive $\lambda$, if not better.


Additionally, we consider positive semi-definite quadratic programs in which $f(x) = \frac{1}{2} \langle x, Q x \rangle + \langle c, x \rangle$ and $g = \delta_{\bR_-}$.
Remark that we now have linear inequality constraints instead of linear equality constraints with box constraints.
The problems are generated as before, and we again use $2$-norms raised to the power $q+1$.
However, this time we use an L-BFGS \cite{byrd1995limited,zhu_algorithm_1997} inner solver.
\cref{fig:representative-plot} visualizes the suboptimality $\lvert f(x^{k+1}) - f^\star \rvert$ and the constraint violation $\Vert A x^{k+1} - b \Vert_2$ as a function of the performed number of L-BFGS iterations for a random instance of the QP with $(m, n) = (400, 800)$.
Moreover, it depicts the interpretation of power ALM as an implicitly defined adaptive penalty scheme by plotting the adaptive penalty $\lambda_k$ from \eqref{eq:adaptive_penalty_2norm}. In our experience, the fast initial decrease in both suboptimality and constraint violation is representative for power ALM with $q \neq 1$.

\begin{figure}
    \centering
    \begin{subfigure}[b]{0.32\textwidth}
        \centering
        \includegraphics[width=\textwidth]{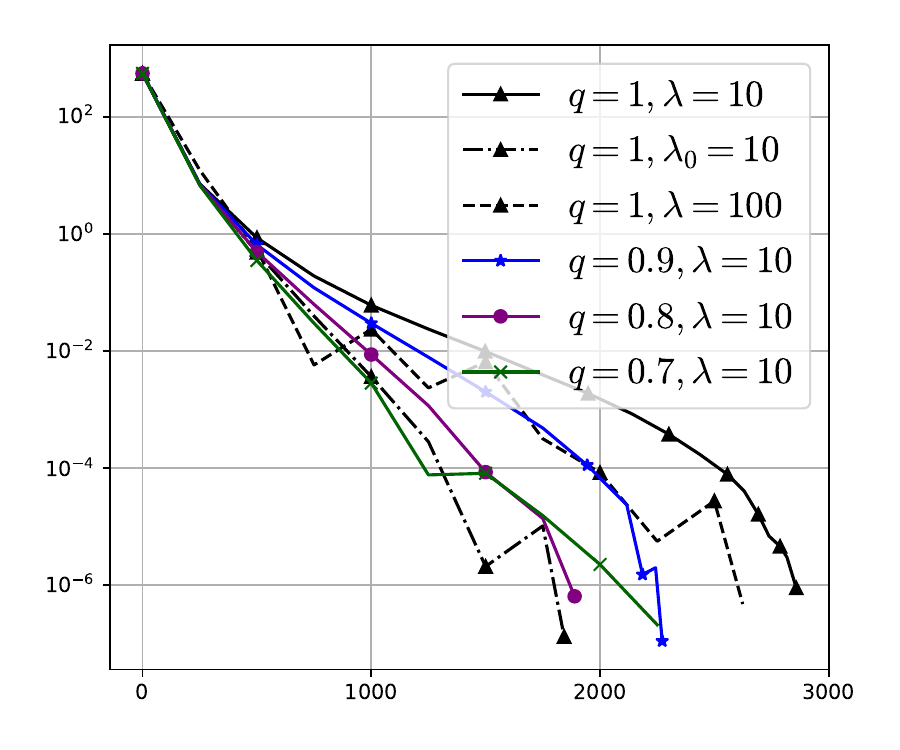}
        \captionsetup{justification=centering}
        \caption{Suboptimality\label{fig:representative-subopt}}
    \end{subfigure}
    \begin{subfigure}[b]{0.32\textwidth}
        \centering
        \includegraphics[width=\textwidth]{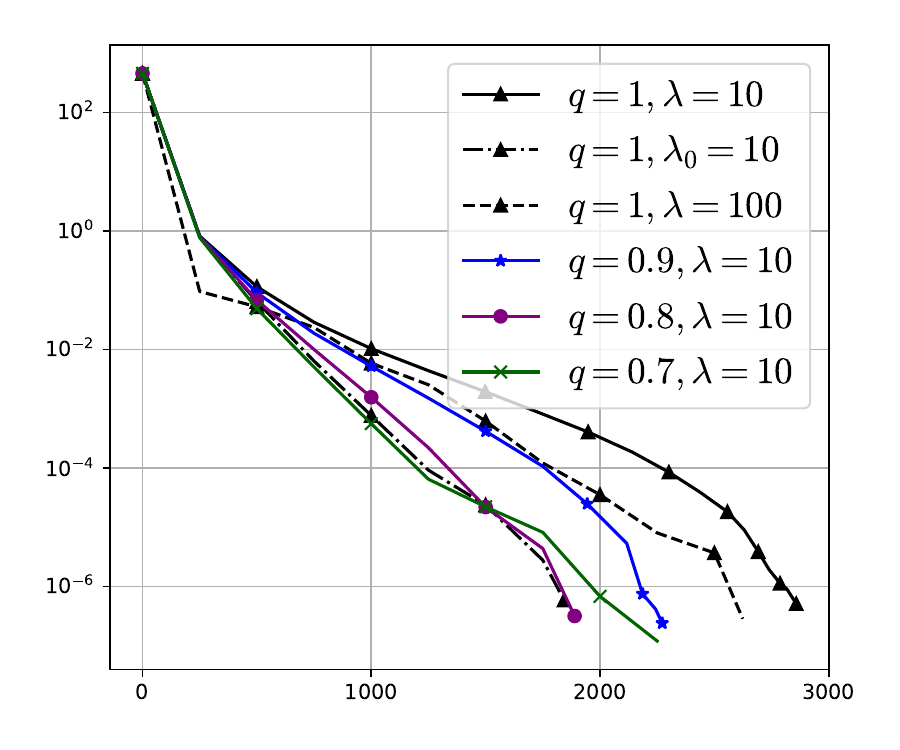}
        \captionsetup{justification=centering}
        \caption{Constraint violation\label{fig:representative-const}}
    \end{subfigure}
    \begin{subfigure}[b]{0.32\textwidth}
        \centering
        \includegraphics[width=\textwidth]{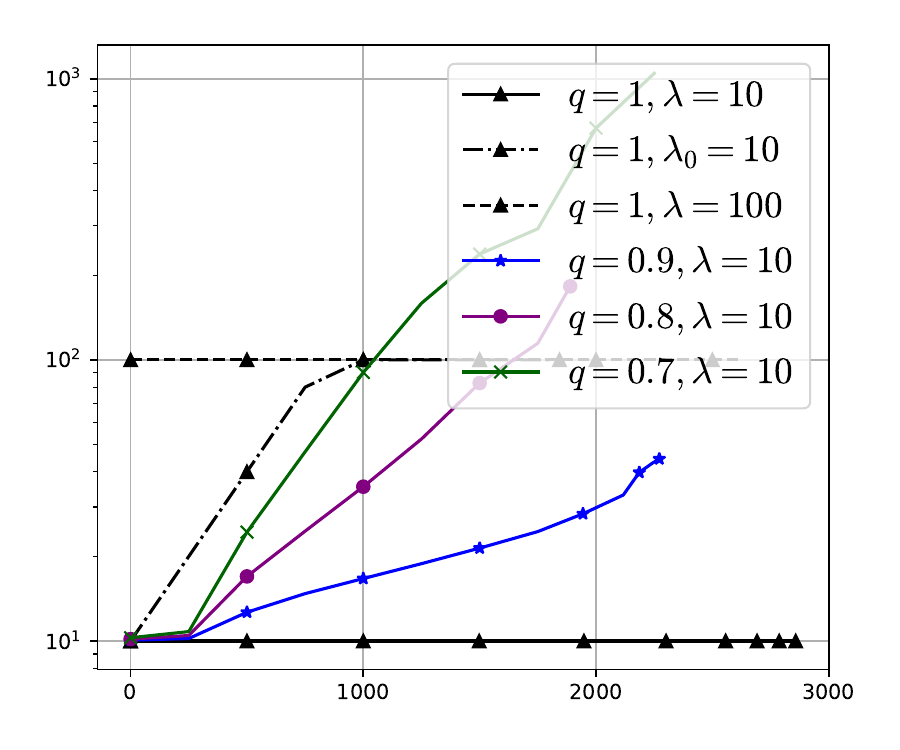}
        \captionsetup{justification=centering}
        \caption{Adaptive penalty\label{fig:representative-penalty}}
    \end{subfigure}
    \captionsetup{justification=centering}
    \caption{Power ALM with L-BFGS inner solver on a QP instance ($m = 800$, $n = 400$). The horizontal axis corresponds to the cumulative number of inner iterations using L-BFGS.\label{fig:representative-plot}}
\end{figure}

\subsection{Regularized \texorpdfstring{$\ell_1$}{l_1}-regression problem}

Finally, we consider an $\ell_2$-regularized $\ell_1$-regression problem, which is an instance of \cref{eq:problem} with $f(x) = \frac{\theta}{2} \Vert x \Vert_2^2$, $g = \Vert \cdot \Vert_1$, and $\mathcal A(x) = A x - b$.
The entries of the matrix $A \in \bR^{m \times n}$ are sampled uniformly at random over the interval $[-5, 5]$, the vector $b \in \bR^{m}$ is sampled from the standard normal distribution, and we fix $\theta = 100$. 
As an inner solver, we use the method presented in \cite{li2024simpleuniformlyoptimalmethod}.
Classical ALM with adaptive penalty uses the proposed adaptive scheme with parameter $\delta = 10^{-1}$.
\Cref{fig:l1-regression-pnorm} visualizes the convergence power ALM when using $(q+1)$-norms, both in terms of the primal cost and the primal-dual gap. It illustrates that qualitatively, these unconventional powers perform similar to a classical ALM method with an adaptive penalty scheme.
\Cref{fig:l1-regression-2norm} depicts similar convergence results for power ALM when using $2$-norms, and visualizes the adaptive penalty interpretation discussed in \cref{sec:implicit-adaptive-penalty-interpretation}. 

\begin{figure}[h]
    \centering
    \begin{subfigure}[b]{0.45\textwidth}
        \centering
        \includegraphics[width=\textwidth]{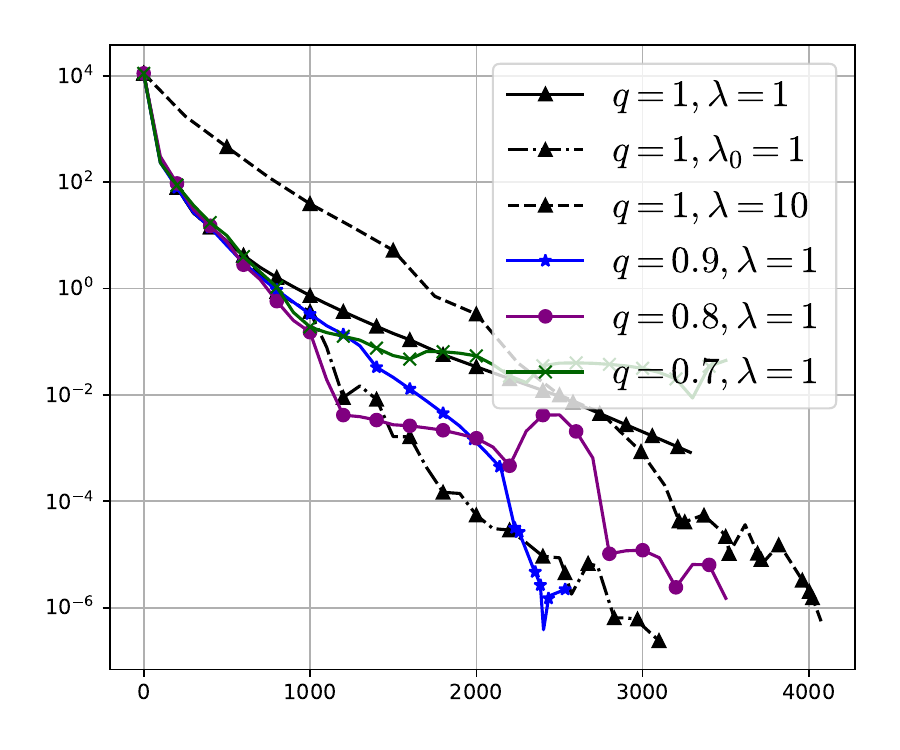}
        \captionsetup{justification=centering}
        \caption{Primal cost\label{fig:l1-regression-pnorm-primal-cost}}
    \end{subfigure}
    \begin{subfigure}[b]{0.45\textwidth}
        \centering
        \includegraphics[width=\textwidth]{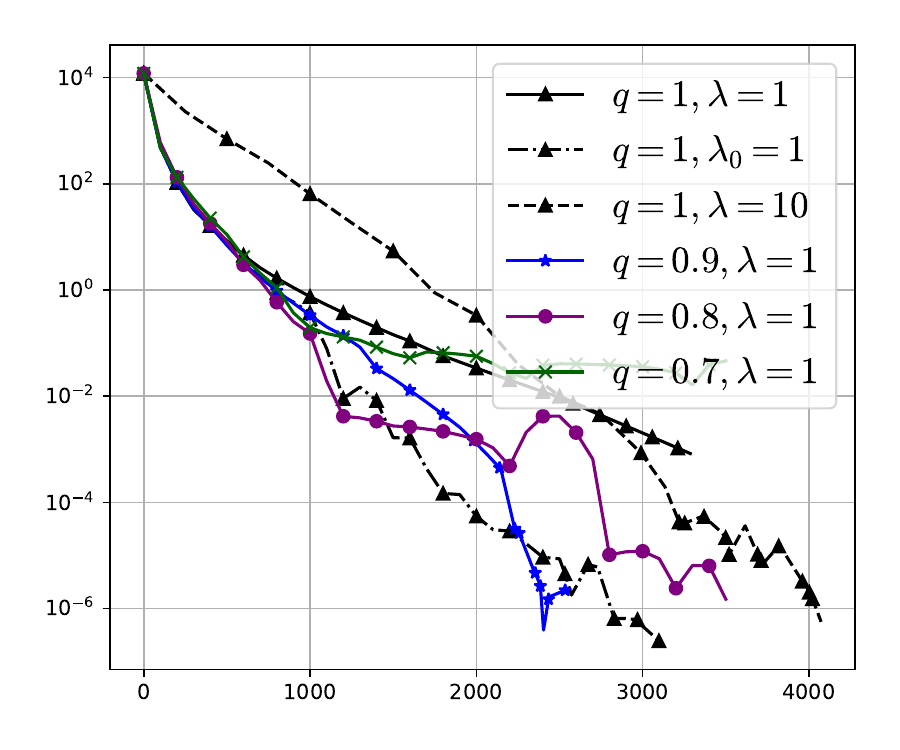}
        \captionsetup{justification=centering}
        \caption{Primal-dual gap\label{fig:l1-regression-pnorm-primal-dual-gap}}
    \end{subfigure}
    \captionsetup{justification=centering}
    \caption{Regularized $\ell_1$-regression problem with $(q+1)$-norms. The horizontal axis corresponds to the cumulative number of inner iterations using \cite{li2024simpleuniformlyoptimalmethod}. \label{fig:l1-regression-pnorm}}
\end{figure}

\begin{figure}
    \centering
    \begin{subfigure}[b]{0.32\textwidth}
        \centering
        \includegraphics[width=\textwidth]{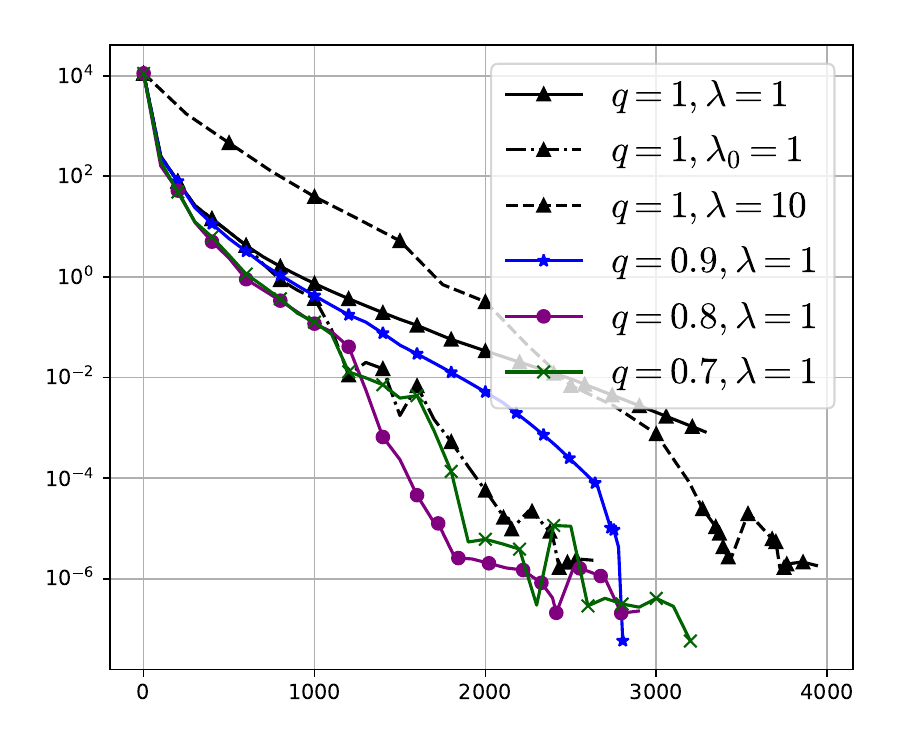}
        \captionsetup{justification=centering}
        \caption{Primal cost\label{fig:l1-regression-2norm-primal-cost}}
    \end{subfigure}
    \begin{subfigure}[b]{0.32\textwidth}
        \centering
        \includegraphics[width=\textwidth]{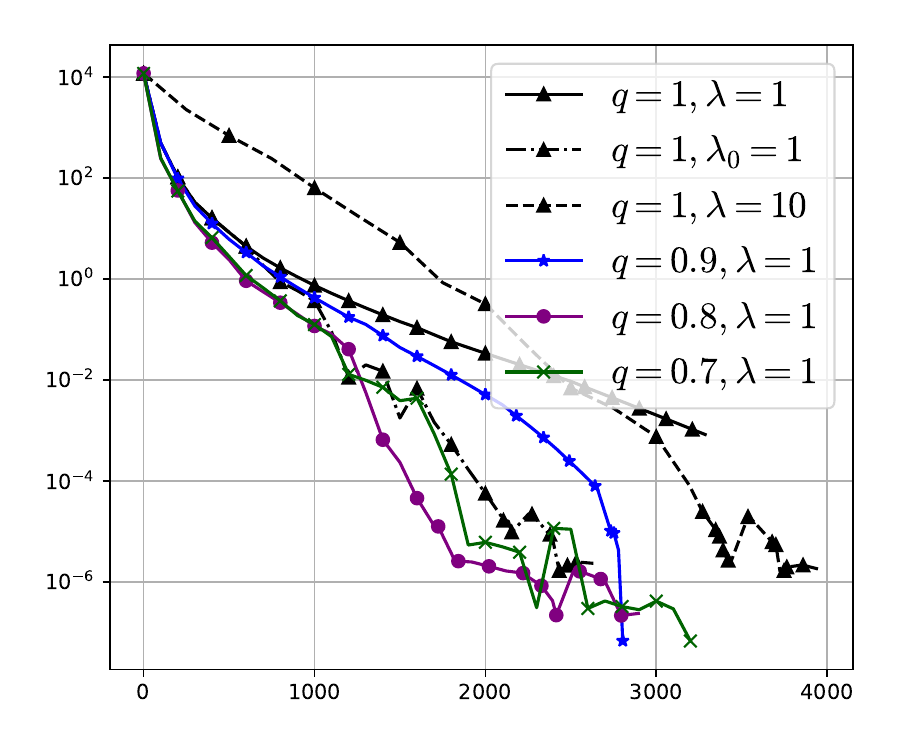}
        \captionsetup{justification=centering}
        \caption{Primal-dual gap\label{fig:l1-regression-2norm-primal-dual-gap}}
    \end{subfigure}
    \begin{subfigure}[b]{0.32\textwidth}
        \centering
        \includegraphics[width=\textwidth]{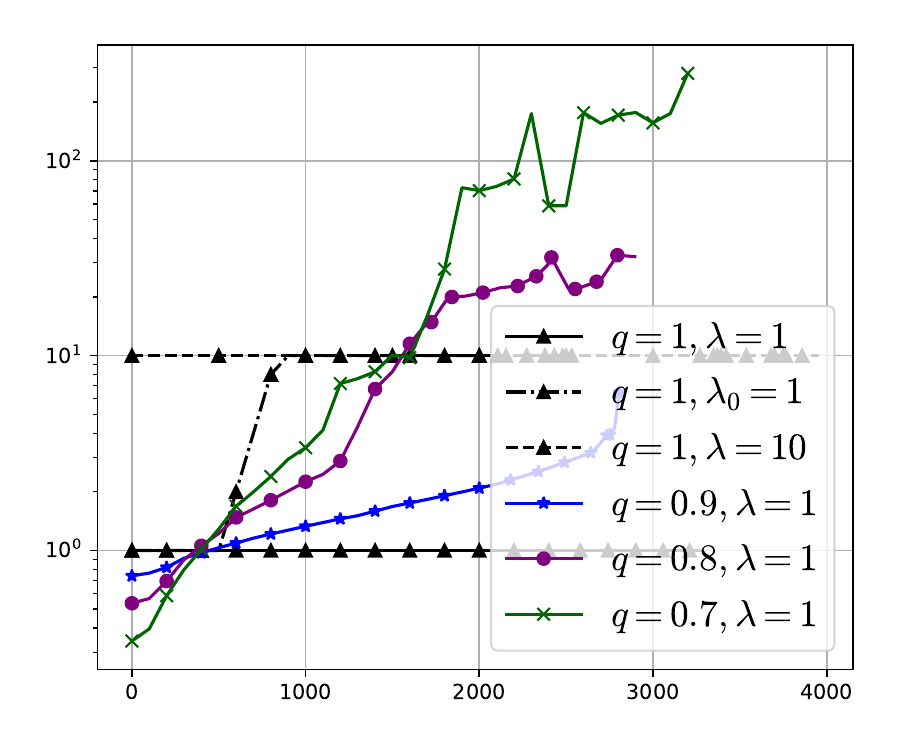}
        \captionsetup{justification=centering}
        \caption{Adaptive penalty\label{fig:l1-regression-2norm-penalty}}
    \end{subfigure}
    \captionsetup{justification=centering}
    \caption{Regularized $\ell_1$-regression problem with 2-norms. The horizontal axis corresponds to the cumulative number of inner iterations using \cite{li2024simpleuniformlyoptimalmethod}. \label{fig:l1-regression-2norm}}
\end{figure}

\section{Conclusion} \label{section:conclusion}
In this paper we have analyzed the inexact power proximal point method and the corresponding Augmented Lagrangian scheme under two different absolute inexactness conditions. In order to incorporate a more relaxed error tolerance, we have also studied the algorithm as an inexact anisotropic gradient descent method. Under this framework, we have generalized the results of \cite{bertsekas2014constrained} for the proximal point method which were specialized to the Augmented Lagrangian setting. Moreover, in the Augmented Lagrangian setting we showed ways to enforce the proposed error conditions in practice, while also providing the corresponding global rates for the primal-dual gap and the measure of infeasibility.
We also provide an interpretation of the power Augmented Lagrangian method as a classical ALM equipped with an implicit adaptive penalty rule.

Finally, our experiments illustrate that for certain powers the power ALM performs similarly to a fine-tuned classical ALM with adaptive penalty scheme, despite requiring fewer parameters.


\newpage
\appendix  

\section{Additional tables} \label{sec-append:additional-tables}
In addition to the median number of BFGS iterations required to solve the LPs reported in \cref{table:lps}, \cref{table:lps-p95} provides the corresponding $95^\text{th}$ percentiles (P95), which qualitatively confirm the previously reported findings.
In a similar way, \cref{table:quadratics-p95} provides the corresponding $95^\text{th}$ percentiles (P95) for the QP experiments reported in \cref{table:quadratics}.

\begin{table}
    \caption{
        P95 of the total number of BFGS iterations required to solve the LP, computed over $N=20$ random realizations.
    }
    \label{table:lps-p95}
    \centering
    \begin{adjustbox}{width=\textwidth}
    \setlength\extrarowheight{5pt}
    
    \pgfplotstabletypeset[%
        begin table={\begin{tabular}[t]},
        every head row/.style={
            before row={%
              \hline
              \vphantom{$q = 1$}\\
              \hline
              \vphantom{$q = 1$}\\
              \hline
            },
            after row/.add={}{\hline},
        },
        header=true,
        col sep=&,
        row sep=\\,
        string type,
        columns/{$(m, n)$}/.style ={column name={$(m, n)$}, column type={|c}},
        every row no 7/.style={after row=\hline},
    ]{
        \\
        $(m, n)$\\
        (200, 100)\\
        (400, 200)\\
        (600, 300)\\
        (300, 100)\\
        (600, 200)\\
        (400, 100)\\
        (500, 100)\\
        (600, 100)\\
    }%
    \pgfplotstabletypeset[%
        begin table={\begin{tabular}[t]},
        every head row/.style={
        before row={%
          \hline
          \multicolumn{4}{|c|}{Classical ALM} & \multicolumn{4}{|c|}{Power ALM}\\
          \hline
          \multicolumn{2}{|c|}{$q = 1$, fixed $\lambda$} & \multicolumn{2}{c|}{$q = 1$, adaptive $\lambda$} & \multicolumn{2}{c|}{$q \neq 1, \lambda = 10^2$} & \multicolumn{2}{c|}{$q \neq 1, \lambda = 10^3$}\\
          \hline
        },
        after row/.add={}{\hline},
        },
        header=true,
       precision=1,
       columns/0/.style ={column name={$\lambda = 10^3$}, column type={|l}},
       columns/1/.style ={column name={$\lambda = 10^4$}, column type={r|}},
       columns/2/.style ={column name={$\lambda_0 = 10^2$}, column type={l}},
       columns/3/.style ={column name={$\lambda_0 = 10^3$}, column type={r|}},
       columns/4/.style ={column name={$q = 0.9$}, column type={l}},
       columns/5/.style ={column name={$q = 0.8$}, column type={r|}},
       columns/6/.style ={column name={$q = 0.9$}, column type={l}},
       columns/7/.style ={column name={$q = 0.8$}, column type={r|}},
        every row no 7/.style={after row=\hline},
       every row 0 column 6/.style={
            postproc cell content/.style={
              @cell content/.add={$\bf}{$}
            }
        },
       every row 1 column 6/.style={
            postproc cell content/.style={
              @cell content/.add={$\bf}{$}
            }
        },
       every row 2 column 6/.style={
            postproc cell content/.style={
              @cell content/.add={$\bf}{$}
            }
        },
       every row 3 column 5/.style={
            postproc cell content/.style={
              @cell content/.add={$\bf}{$}
            }
        },
       every row 4 column 5/.style={
            postproc cell content/.style={
              @cell content/.add={$\bf}{$}
            }
        },
       every row 5 column 5/.style={
            postproc cell content/.style={
              @cell content/.add={$\bf}{$}
            }
        },
       every row 6 column 7/.style={
            postproc cell content/.style={
              @cell content/.add={$\bf}{$}
            }
        },
       every row 7 column 7/.style={
            postproc cell content/.style={
              @cell content/.add={$\bf}{$}
            }
        },
    ]{data/lp_p95.csv}
    \end{adjustbox}
    
\end{table}

\begin{table}
    \caption{
        P95 of the total number of iterations of \cite{li2024simpleuniformlyoptimalmethod} required to solve the QP, over $N=20$ random realizations.
    }
    \label{table:quadratics-p95}
    \centering
    \begin{adjustbox}{width=\textwidth}
    \setlength\extrarowheight{5pt}
    
    \pgfplotstabletypeset[%
        begin table={\begin{tabular}[t]},
        every head row/.style={
            before row={%
              \hline
              \vphantom{$q = 1$}\\
              \hline
              \vphantom{$q = 1$}\\
              \hline
            },
            after row/.add={}{\hline},
        },
        header=true,
        col sep=&,
        row sep=\\,
        string type,
        columns/{$(m, n)$}/.style ={column name={$(m, n)$}, column type={|c}},
        every row no 9/.style={after row=\hline},
    ]{
        \\
        $(m, n)$\\
        (200, 400)\\
        (250, 500)\\
        (300, 600)\\
        (350, 700)\\
        (400, 800)\\
        (450, 900)\\
        (150, 450)\\
        (200, 600)\\
        (250, 750)\\
        (300, 900)\\
    }%
    \pgfplotstabletypeset[%
        begin table={\begin{tabular}[t]},
        every head row/.style={
        before row={%
          \hline
          \multicolumn{6}{|c|}{Classical ALM} & \multicolumn{3}{|c|}{Power ALM}\\
          \hline
          \multicolumn{3}{|c|}{$q = 1$, fixed $\lambda$} & \multicolumn{3}{c|}{$q = 1$, adaptive $\lambda$} & \multicolumn{3}{c|}{$q \neq 1, \lambda = 0.1$}\\
          \hline
        },
        after row/.add={}{\hline},
        },
        header=true,
       precision=1,
       columns/0/.style ={column name={$\lambda = 0.1$}, column type={|l}},
       columns/1/.style ={column name={$\lambda = 1$}, column type={c}},
       columns/2/.style ={column name={$\lambda = 10$}, column type={r|}},
       columns/3/.style ={column name={$\lambda_0 = 0.01$}, column type={l}},
       columns/4/.style ={column name={$\lambda_0 = 0.1$}, column type={c}},
       columns/5/.style ={column name={$\lambda_0 = 1$}, column type={r|}},
       columns/6/.style ={column name={$q = 0.9$}, column type={l}},
       columns/7/.style ={column name={$q = 0.8$}, column type={c}},
       columns/8/.style ={column name={$q = 0.7$}, column type={r|}},
        every row no 9/.style={after row=\hline},
       every row 0 column 6/.style={
            postproc cell content/.style={
              @cell content/.add={$\bf}{$}
            }
        },
       every row 1 column 7/.style={
            postproc cell content/.style={
              @cell content/.add={$\bf}{$}
            }
        },
       every row 2 column 8/.style={
            postproc cell content/.style={
              @cell content/.add={$\bf}{$}
            }
        },
       every row 3 column 7/.style={
            postproc cell content/.style={
              @cell content/.add={$\bf}{$}
            }
        },
       every row 4 column 8/.style={
            postproc cell content/.style={
              @cell content/.add={$\bf}{$}
            }
        },
       every row 5 column 8/.style={
            postproc cell content/.style={
              @cell content/.add={$\bf}{$}
            }
        },
       every row 6 column 6/.style={
            postproc cell content/.style={
              @cell content/.add={$\bf}{$}
            }
        },
       every row 7 column 8/.style={
            postproc cell content/.style={
              @cell content/.add={$\bf}{$}
            }
        },
       every row 8 column 8/.style={
            postproc cell content/.style={
              @cell content/.add={$\bf}{$}
            }
        },
       every row 9 column 8/.style={
            postproc cell content/.style={
              @cell content/.add={$\bf}{$}
            }
        },
       every row 10 column 7/.style={
            postproc cell content/.style={
              @cell content/.add={$\bf}{$}
            }
        },
       every row 11 column 7/.style={
            postproc cell content/.style={
              @cell content/.add={$\bf}{$}
            }
        },
       every row 12 column 7/.style={
            postproc cell content/.style={
              @cell content/.add={$\bf}{$}
            }
        },
       every row 13 column 7/.style={
            postproc cell content/.style={
              @cell content/.add={$\bf}{$}
            }
        },
       every row 14 column 7/.style={
            postproc cell content/.style={
              @cell content/.add={$\bf}{$}
            }
        },
       every row 15 column 7/.style={
            postproc cell content/.style={
              @cell content/.add={$\bf}{$}
            }
        },
       every row 16 column 7/.style={
            postproc cell content/.style={
              @cell content/.add={$\bf}{$}
            }
        },
    ]{data/convex_quadratic_v2_p95.csv}
    \end{adjustbox}
\end{table}

\section{Missing proofs} \label{appendix:missing-proofs}

\subsection{Proof of \cref{thm:errors_alm}} \label{sec-append:prf-errors_alm}
\begin{proof}
``\labelcref{thm:errors_alm:al}'':
Since $y^+ \in \argmin_{\eta \in \bR^m} \lambda \star \phi(y - \eta)-L(x^+, \cdot)$ we have that
$$
v = \nabla \phi(\lambda^{-1}(y - y^+)) \in \partial_y(-L)(x^+, y^+).
$$
By concavity of $L(x, \cdot)$ we have that for all $y \in \bR^m$
$$
\langle v, y^+-y \rangle \geq L(x^+, y)-L(x^+, y^+) \geq \varrho(y)-L(x^+, y^+),
$$
where the second inequality follows by the definition of the dual function $\varrho$. Since $L(x^+, y^+) \leq \inf L(\cdot, y^+) + \varepsilon$ we can further bound
$
\langle v, y^+-y \rangle \geq \varrho(y)-\varrho(y^+) - \varepsilon.
$
Rearranging this yields
$
(-\varrho)(y) \geq (-\varrho)(y^+)+\langle v,y- y^+ \rangle - \varepsilon,
$
for any $y \in \bR^m$ and hence $v \in \partial_{\varepsilon}(-\varrho)(y^+)$, i.e. $v \in \partial_{\varepsilon}\psi(y^+)$.

``\labelcref{thm:errors_prox_alm:bound_alm}'': Using the concavity of the augmented Lagrangian function $L_\lambda(x^{k+1}, \cdot)$, we get
\begin{align} \nonumber
L_\lambda (x^+,y) + \langle \nabla_y L_\lambda(x^+, y), \eta - y \rangle &\geq L_\lambda (x^+, \eta) \geq \inf_{x \in \bR^n} L_\lambda(x, \eta) \\ \nonumber
&=\inf_{x \in \bR^n} \max_{y \in \bR^m}~L(x,y) -\lambda \star \phi(\eta - y) \\ \nonumber
&=\max_{y \in \bR^m} \inf_{x \in \bR^n} ~L(x,y) -\lambda \star \phi(\eta - y) \\ \nonumber
&=\max_{y \in \bR^m} \varrho(y) -\lambda \star\phi(\eta - y) \\
&\geq \varrho(\aprox{\lambda\star\phi}{\psi}(y)) - \lambda \star\phi(\eta -\aprox{\lambda\star\phi}{\psi}(y)), \label{eq:aug_lag_bound}
\end{align}
where we have used the definitions of the Lagrangian dual function, the properties of $\inf$ and $\max$ and strong duality.
The same reasoning also yields:
\begin{align} \nonumber
\inf_{x \in \bR^n} L_\lambda(x,y) &=\inf_{x \in \bR^n} \max_{\eta \in \bR^m}~L(x,\eta) -\lambda \star\phi(y - \eta) =\max_{\eta \in \bR^m} \varrho(\eta)-\lambda \star\phi(y - \eta) \\
&=\varrho(\aprox{\lambda\star\phi}{\psi}(y))-\lambda \star\phi(y - \aprox{\lambda\star\phi}{\psi}(y)). \label{eq:min_aug_lag_bound}
\end{align}
Now, from the optimality condition in  \cref{eq:general_inexact_dual}, we have that $\lambda^{-p}\nabla \phi(y - y^+) = \nabla_y L_\lambda(x^+,y)$. Therefore, by combining \cref{eq:aug_lag_bound} and \cref{eq:min_aug_lag_bound}, we obtain the following result: 
\begin{align*}
&L_\lambda(x^+, y) - \inf_x L_\lambda(x, y) \\ 
&\qquad \geq  \lambda \star\phi(y - \aprox{\lambda\star\phi}{\psi}(y)) - \lambda \star\phi(\eta - \aprox{\lambda\star\phi}{\psi}(y)) + \lambda^{-p}\langle \nabla \phi(y - y^+), \eta - y \rangle,
\end{align*}
which holds for all $\eta \in \bR^m$ so that we can replace $\eta$ with its maximum. This is attained if $\lambda^{-p}\nabla \phi(\eta - \aprox{\lambda\star\phi}{\psi}(y)) = \lambda^{-p}\nabla \phi(y-y^+)$, i.e., $\eta = y-y^+ + \aprox{\lambda\star\phi}{\psi}(y)$.
Substituting this result in the inequality above, we finally arrive at the claimed inequality
\begin{align*}
L_\lambda(x^+, y) - \inf_x L_\lambda(x, y) &\geq \lambda \star\phi(y - \aprox{\lambda\star\phi}{\psi}(y)) - \lambda \star \phi(y-y^+)
\\ 
&\qquad - \lambda^{-p}\langle \nabla \phi(y - y^+), y-\aprox{\lambda\star\phi}{\psi}(y) - (y- y^+) \rangle
\\
&\geq \lambda^{-p} \left(\tfrac{1}{2}\right)^{p-1}\tfrac{1}{p+1}\|y^+ - \aprox{\lambda\star\phi}{\psi}(y)\|^{p+1},
\end{align*}
where the final inequality follows from \cref{thm:uniform_convexity}.

``\labelcref{thm:errors_prox_alm:grad_alm}'': We have that $e \in \partial_x L(x^+, y^+)$ and $0 \in \partial_y (-L)(x^+, y^+) - \nabla \phi(\lambda^{-1}(y - y^+))$ if and only if
$$
(x^+, y^+) \in \argminimax_{x \in \bR^n, \eta \in \bR^m} ~L(x, \eta) - \langle e, x \rangle - \lambda \star \phi(y-\eta),
$$
and hence
\begin{align} \label{eq:augmented_lagrangian_gradient}
e \in \partial_x L(x^+, y^+) \Leftrightarrow x^+ \in \argmin_{x \in \bR^n} ~L_{\lambda}(x, y) - \langle e, x \rangle \Leftrightarrow e \in \partial_x L_{\lambda}(x^+, y).
\end{align}
By convexity of $L(\cdot, y^+)$ we have for all $x \in \bR^m$ that
$
L(x, y^+) \geq L(x^+, y^+) + \langle e, x - x^+ \rangle,
$
and hence
$
\inf L(\cdot, y^+) \geq L(x^+, y^+) - \|e\| \cdot \inf_{x \in \dom f} \|x - x^+\|
$
implying that
$
L(x^+, y^+) \leq \inf L(\cdot, y^+) + \|e\| D \leq \inf L(\cdot, y^+) +\varepsilon.$

For the second part note that by convexity of $L_\lambda(\cdot, y)$, we have that
$
    \inf L_\lambda(\cdot, y) - L_\lambda(x^+, y) \geq \langle e, x-x^+\rangle,
$
or by rearranging and using the Cauchy--Schwarz inequality $
    L_\lambda(x^+, y) - \inf L_\lambda(\cdot, y) \leq \|e\| D \leq \varepsilon,$
which is the desired result.
\ifx\ifsvjour\true
\qed
\fi
\end{proof}


\subsection{Proof of \cref{thm:alm-linear}} \label{sec-append:prf-alm-linear}
\begin{proof}
    ``\labelcref{thm:linear_eq:feasibility}'': The proof is based on the equivalence with the higher-order proximal-point method applied to the negative dual function $\psi = -\varrho$. Invoking \cref{thm:decrease_inexact_ppa} for points $y = y^k$ and $w = y^k$:
    \begin{align*}
        \psi(y^{k+1}) \leq \psi(y^k) - \lambda^{-p} (1 + \tfrac{1}{2^{p-1}})\phi(y^{k+1}-y^k) + \varepsilon_k.
    \end{align*}
    By rearranging and using the fact that $\psi(y^{k+1}) \geq \inf \psi$ we obtain the following inequality:
    \begin{align*}
        \lambda^{-p}(1 + \tfrac{1}{2^{p-1}})\phi(y^{k+1}-y^k) \leq \psi(y^k) - \inf \psi + \varepsilon_k.
    \end{align*}
    Substituting \cref{eq:dual_update_linear_equality} into the inequality above and using the fact that $\phi(\nabla \phi^*(x)) = \|x\|_*^q$, we have that
    \begin{align*}
        \|Ax^{k+1}-b\|_*^{\tfrac{p+1}{p}} \leq (\lambda(1 + \tfrac{1}{2^{p-1}}))^{-1}(\psi(y^k) - \inf \psi + \varepsilon_k).
    \end{align*}
    Using now \cref{thm:inexact_ppa_rate}, we obtain
    \begin{align*}
        \|Ax^{k+1}-b\|_* \leq (\sigma \eta_k)^{\tfrac{p}{p+1}},
    \end{align*}
    for $\eta_k = \tfrac{\lambda^{-p}D_\beta^{p+1} (p+1)^{p} + c}{k^p}$ and $\sigma = (\lambda(1 + \tfrac{1}{2^{p-1}}))^{-1}$, which implies the claimed result.

    ``\labelcref{thm:linear_eq:f_bound}'': To begin with, using the fact that strong duality holds, choosing a saddle point $(x^\star, y^\star)$ and using H\"older's inequality, we have that
    \begin{align*}
        f(x^\star) &\leq L_\lambda(x^{k+1}, y^\star) \\
        &\leq f(x^{k+1}) + \|y^\star\|\|Ax^{k+1}-b\|_* + \lambda^{-p}\phi^*(Ax^{k+1}-b),
    \end{align*}
    which using the previous result readily implies that
    \begin{align} \label{eq:f_lower_bound}
        f(x^{k+1}) - f(x^\star) \geq -\|y^\star\| (\sigma \eta_k)^{\tfrac{p}{p+1}} - \tfrac{\lambda^{-p}}{q+1} \sigma \eta_k
    \end{align}
    Now, in order to prove the other side of the inequality for the absolute value, using the error condition of the update and the relation $\inf L(\cdot, y^{k+1}) = \varrho(y^{k+1})$ we have
    \begin{align*}
        f(x^{k+1}) + \langle y^{k+1},Ax^{k+1}-b \rangle = L(x^{k+1},y^{k+1}) = \inf L(\cdot,y^{k+1}) + \varepsilon_k = \varrho(y^{k+1}) + \varepsilon_k,
    \end{align*}
    and by using the bound $\|y^{k+1}-y^\star\| \leq D_\beta$, the triangle inequality, as well as the fact that $\varrho(y^{k+1}) \leq f(x^\star)$, we can further write:
    \begin{align} \label{eq:f_upper_bound}
        f(x^{k+1}) - f(x^\star) \leq (\|y^\star\|+D_\beta)(\sigma \eta_k)^{\tfrac{p}{p+1}} + \varepsilon_k
    \end{align}
    The claimed result now follows by combining \cref{eq:f_lower_bound} and \cref{eq:f_upper_bound}.
\ifx\ifsvjour\true
\qed
\fi
\end{proof}

\newpage
\printbibliography

\end{document}